\definecolor{orange}{rgb}{1,0.5,0}
\definecolor{orcidgreen}{RGB}{166,206,57}
\newcommand{\orcidicon}{\includegraphics[width=0.26cm]{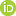}}
\def\orcidID#1{\renewcommand{\thefootnote}{\fnsymbol{footnote}}
  \unskip$^{\orcidicon}$\footnote{\orcidicon\color{orcidgreen}\,\footnotesize #1}\unskip
  \renewcommand{\thefootnote}{\arabic{footnote}}\unskip
}
\title{Weakly modular graphs with diamond condition, the interval function and  axiomatic characterizations }
\titlerunning{The axiomatic characterization}
\author{Lekshmi Kamal K. Sheela\inst{1}\orcidID{0000-0002-8527-3280}
  \and  Jeny Jacob\inst{1}\orcidID{0000-0001-6477-154X}  
  \and Manoj Changat\inst{1}\orcidID{0000-0001-7257-6031}}
\authorrunning{M.\ Changat, L.K. Sheela }
\institute{Department of Futures Studies, University of Kerala, Trivandrum
  695 581, India \email{lekshmisanthoshgr@gmail.com, jenyjacobktr@gmail.com, mchangat@keralauniversity.ac.in, }
  }
\date{ 26, April 2022}
\begin{document}

\maketitle
		\begin{abstract}
		 
		Weakly modular graphs are defined as the class of graphs that satisfy the \emph{triangle condition ($TC$)} and the \emph{quadrangle condition ($QC$)}. We study an interesting subclass of weakly modular graphs that satisfies a stronger version of the triangle condition, known as the \emph{triangle diamond condition ($TDC$)}. and term this subclass of weakly modular graphs as the \emph{diamond-weakly modular graphs}. It is observed that this class contains the class of bridged graphs and the class of weakly bridged graphs.
		 
		 The interval function $I_G$ of a connected graph $G$ with vertex set $V$ is an important concept in metric graph theory and is one of the prime example of a transit function; a  set function defined on the Cartesian product $V\times V$ to the power set of $V$ satisfying the expansive, symmetric and idempotent axioms.\\
		 In this paper, we derive an interesting axiom denoted as $(J0')$, obtained from a well-known axiom introduced by Marlow Sholander in 1952, denoted as $(J0)$. It is proved that the axiom $(J0')$ is a characterizing axiom  of the diamond-weakly modular graphs. We propose certain types of independent first-order betweenness axioms on an arbitrary transit function $R$ and prove that an arbitrary transit function becomes the interval function of a diamond-weakly modular graph if and only if $R$ satisfies these betweenness axioms. Similar characterizations are obtained for the interval function of bridged graphs and weakly bridged graphs. 
	\end{abstract}
	
\section{Introduction}\label{intro} 

In this paper, we consider only finite, simple, and connected graphs. \emph{Weakly modular graphs} form an interesting class of graphs and are introduced by Chepoi in \cite{Ch_metric}. Weakly modular graphs generalize a variety of  classes of graphs in \emph{metric graph theory} such as  modular graphs, Helly graphs, bridged graphs, and dual polar graphs.  Weakly modular graphs have applications beyond metric graph theory and studied in other fields of mathematics like geometric group theory, incidence geometries and buildings, theoretical computer science, and combinatorial optimization, see, Chalopine et al. in \cite{CCHO}, also, see the survey due to Bandelt and Chepoi \cite{BaCh_survey}. 

The main goal of the studies of graph classes in metric graph theory involves the investigation that how far the properties of the standard path metric $d$ in these graph classes can be approximated with the main properties of classical metric geometries like Euclidean $\ell_2$-geometry (and more generally, the $\ell_1$- and $\ell_{\infty}$-geometries), hyperbolic spaces, hypercubes, and trees. Several interesting classes of graphs are identified to have such properties. 

Weakly modular graphs and their subclasses are one of the central classes of metric graph theory. Bridged graphs form an interesting subclass of weakly modular graphs. A graph $G = (V, E)$  is a \emph{bridged graph} if $G$ has no isometric cycles of length greater than $3$. It is trivial to note that the family of bridged graphs contain the family of chordal graphs. In \cite{Ch_metric}, bridged graphs are characterized as the so called $(C_4$,  $ C_5)$-free weakly modular graphs and in \cite{faber}, Farber proved that a graph $G$ is a bridged graph if and only if all neighborhoods of convex sets are convex. Another interesting subclass of weakly modular graphs is the class  weakly bridged graphs, a superclass of bridged graphs, defined in \cite{CCHO}. A graph $G$ is called  weakly bridged if it is weakly modular without induced cycle of length four. 

In this paper, we introduce another subclass of weakly modular graphs, which we name as \emph{diamond-weakly modular graphs}. We prove that these graphs form a super-class of weakly bridged graphs and bridged graphs. We characterize these graphs using properties of the \emph{interval function} of the graph. The interval function $I_G$ of a graph $G$ can be related to \emph{metric betweenness} of $G$.

Recently, a new property in terms of the `natural betweenness', namely the metric betweenness in graphs is observed on the main classes of graphs in metric graph theory in \cite{Chalopin -2021}. That is, the central classes of graphs in metric graph theory  are capable of a characterization in first order logic (definable in first order logic) with the `` betweenness predicate" induced by the natural betweenness on graphs. More precisely,  the {\it metric betweenness} (or {\it shortest path betweenness}) resulting from the graph metric $d$ of graphs $G=(V,E)$ are defined using the ternary relation $B(abc)$ on the vertex set $V$ in such a way that ``the vertex $b$ lies on some shortest path of $G$ between the vertices $a$ and $c$.'' It is established in \cite{Chalopin -2021} that the first order logic with betweenness, denoted in short as \textbf{FOLB}, is a powerful logic as far as the first order axiomatization of graph properties are concerned. 

Metric betweenness can be expressed more conveniently by the  \emph{interval function}, defined for a connected graph $G$ as the function $I_G: V\times V \longrightarrow 2^{V}$ with   
\noindent
		\begin{align*}
	    I_G(u,v) & =\{w\in V: d(u,w) +d(w,v) = d(u,v) \} \\
	    &=\{w\in V: w \text{ lies on some shortest } u,v -\text{ path in } G \}.
	\end{align*} 

In this paper, we avoid the language of logic and use the interval function $I_G$ rather than the betweenness relation $B$ due to the simplicity of $I_G$.
The function $I_G$ is a well-known tool in metric graph theory and several authors have studied the function $I_G$, in particular, Mulder has given a systematic study of $I_G$ in the axiomatic setting in \cite{mu-80}. The interval function of a weakly modular graph has very special properties and a nice axiomatic characterization of $I$ exists for $G$ in terms of an arbitrary set function known as a transit function in \cite{ch-helly}. The term transit function is due to Mulder \cite{muld-08}, and it is introduced to generalize the three classical notions in mathematics, namely, convexity, interval and betweenness in an axiomatic approach. The concept of transit function is already known as interval operator and is used by Van de vel in \cite{VdV} in the context of convexity, see also, \cite{BaVdVVe,He}. Bandelt and Chepoi in \cite{ch-helly} discussed the same concept for studying metric properties of discrete metric spaces and graphs. In this paper, we follow Mulder and use the term transit function. 

Given a non-empty set $V$, a \emph{transit function} is defined as a function $R: V\times V \longrightarrow 2^{V}$ satisfying the following three axioms:
	
	\begin{description}
		\item [$(t1)$] $u \in R(u,v)$, for all $u,v \in V$,
		\item [$(t2)$] $R(u,v) = R(v,u)$, for all $u,v \in V$,
		\item [$(t3)$] $R(u,u) = \{u\}$, for all $u \in V$.
	\end{description}

We may refer $R$ as a transit function on $V$. If $V$ is the vertex set of a graph $G$, then we say that $R$ is a transit function on $G$. 

Given a transit function $R$ on $V$, one can define the underlying graph $G_{R}$ of a transit function $R$ on $V$ as the graph with vertex set $V$, where two distinct vertices $u$ and $v$ are joined by an edge if and only if $R(u,v) = \{u,v\}$. 

\noindent
 
 Nebesk\'{y} initiated an interesting problem of characterizing  the interval function $I$ of a connected graph $G=(V, E)$ using a set of simple first-order axioms defined on an arbitrary transit function $R$ on $V$ during the 1990s. Nebesk\'{y} \cite{nebe-94,nebesky-94} proved that there exists such a characterization for the interval function $I(u,v)$ in terms of an arbitrary transit function $R$. More such characterizations are described in \cite{nebe-95,ne-08,nebesky-08,nebe-01,mune-09}.  
 
In the rest of this section, we fix some of the graph theoretical notations and terminology used in this paper.  Let $G$ be a graph and $H$ a subgraph of $G$. The subgraph  $H$ is called an \textit{isometric} subgraph of $G$ if the distance $d_H(u,v)$ between any pair of vertices, $u,v$ in $H$  coincides with that of the distance $d_G(u,v)$.  $H$ is called an \textit{induced} subgraph if $u,v$ are vertices in $H$ such that $uv$ is an edge in $G$, then $uv$ must be an edge in $H$ also. 
A graph $G$ is said to be $H$\emph{-free}, if $G$ has no induced subgraph isomorphic to $H$. A \emph{$k$-wheel} is a graph consisting of a cycle on $k$-vertices, $C_k$ ( $k\geq 4$), and a central vertex adjacent to all the vertices of the cycle. A wheel graph is a $k$-wheel for some $k\geq 4$.

We consider the following metric conditions on a graph $G$:
 \begin{itemize}
\item  Triangle condition $(TC)$: for any three vertices $u,v$, $w$ with $1 = d(v, w) < d(v, u) = d(u, w)$, there exists a common neighbor $z$ of $v$ and $w$ such that $d(u, z) = d(u, v) - 1$.\\
\item Quadrangle condition $(QC)$: for any four vertices $u,v$, $w$, $y$ with $d(v, y) = d(w, y) = 1$ and $2 = d(v, w) \leq d(u, v) = d(u, w) = d(u, y) - 1$, there exists a common neighbor $z$ of $v$ and $w$ such that $d(u, z) = d(u, v) - 1$.\\
\end{itemize}
It can be easily verified that the conditions $(TC)$ and $(QC)$ can also be defined in terms of the interval $I_G(u,v)$ instead of the distance $d$ in $G$. Also, it follows that graphs satisfying $(QC)$ are precisely the class of modular graphs \cite{CCHO}. 

A graph $G$ is \emph{weakly modular} if its distance function $d$ satisfies the triangle and quadrangle conditions. \\


We organize the paper as follows. In Section~\ref{diawm}, we define diamond-weakly modular graphs and prove that the family of diamond-weakly modular graphs contain the class of bridged graphs and  weakly bridged graphs. Further, we prove that the family of diamond-weakly modular graphs are closed under the operation of gated amalgamations. Section~\ref{relations}, we provide the implications of the axioms that we consider in this paper for a general transit function $R$.  In Section~\ref{interval_function}, we characterize the diamond-weakly modular graph $G$ using the axiom $(J0')$ on its interval function $I_G$ and discuss the axiomatic characterization of the interval function of the class of diamond-weakly modular graphs using an arbitrary transit function $R$.  In Section~\ref{Bridged}, we provide the axiomatic characterizations of bridged graphs and weakly bridged graphs, respectively, using a set of first-order axioms on an arbitrary transit function.

\section{Diamond-weakly modular graphs}\label{diawm}
We first define the diamond-weakly modular graph, for that we introduce a stronger version of triangle condition known as  triangle diamond condition $(TDC)$, defined as follows.
\begin{itemize}
\item  Triangle diamond condition $(TDC)$: For any three vertices $u,v, w$ with $1 = d(v, w) < d(u, v) = d(u, w)$, there exists a common neighbor $z$ of $v$ and $w$ such that $d(u, z) = d(u, v) - 1$ and the vertices $v,w,z$ should form diamonds with  vertices $x$ and $y$ where $d(x,v)=d(y,w)=1$ and $d(u,x)= d(u,v)-1$, and $d(u,y)= d(u,w)-1$. 
\end{itemize}
A graph $G$ is called \emph{ diamond-weakly modular}  if its distance function $d$ satisfies $(QC)$ and $(TDC)$.
It can be easily verified that the wheel graphs satisfy the $(TDC)$ and hence belong to the class of diamond-weakly modular graphs. Furthermore, diamond-weakly modular graphs contain the class of modular graphs by definition as it satisfies the quadrangle condition.

Now, we prove  the interesting subclasses of weakly modular graphs  such as bridged graphs and weakly bridged graphs are diamond-weakly modular. For that, we  define some more graph theoretic terms defined in \cite{fab}. 
A cycle $C$ is \emph{well-bridged} if and only if, for each $v$ in $C$, either the neighbors of $v$ in $C$ are  adjacent, or $d_G(v,x)< d_C(v,x)$ for some antipode $x$ of $v$ in $C$. An \emph{antipode} of a node $v$ of $C$ is a node of $C$ at maximum distance from $v$ along $C$. 

\begin{theorem}\cite{fab}\label{well-bridged}
   If $G$ is a bridged graph, then every cycle in $G$ is well-bridged.
  \end{theorem}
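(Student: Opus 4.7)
\emph{Proof proposal.} My plan is a proof by contradiction based on a minimum counterexample. Suppose $G$ is bridged but some cycle in $G$ fails to be well-bridged, and let $C$ be such a cycle of minimum length. Then there exists $v \in C$ whose two $C$-neighbors $u_1, u_2$ are non-adjacent in $G$, and every antipode $x$ of $v$ in $C$ satisfies $d_G(v,x)=d_C(v,x)=\lfloor |C|/2 \rfloor=:k$.

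First I dispose of $|C|=4$ directly. The unique antipode $x$ of $v$ has $d_G(v,x)=2$, so $vx \notin E(G)$; combined with the assumption $u_1 u_2 \notin E(G)$, this makes $C$ chordless and hence isometric in $G$, contradicting that $G$ is bridged. So I may assume $|C|\geq 5$.

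Since $|C|\geq 4$ and $G$ is bridged, $C$ is not isometric in $G$, so there exist $a,b \in C$ with $d_G(a,b)<d_C(a,b)$. Choose a shortest $a$-$b$ path $Q$ in $G$ and split $C$ into two cycles $C_1, C_2$, each consisting of $Q$ together with one of the two $a$-$b$ arcs of $C$; both have length strictly less than $|C|$ and hence are well-bridged by the minimality of $C$. The idea is to use the well-bridged conclusion at $v$ inside whichever cycle $C_i$ contains $v$: this yields an antipode $x'$ of $v$ in $C_i$ with $d_G(v,x')<d_{C_i}(v,x')$, from which I then extract an antipode of $v$ in $C$ witnessing the same strict inequality.

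The main obstacle, which I expect to be the core technical difficulty, is precisely this last transfer: the antipode $x'$ may lie on the shortcut $Q$ rather than on $C$, and the notions of ``antipode'' in $C_i$ and in $C$ need not coincide. I would handle this by a case analysis on the position of $\{a,b\}$ relative to $\{v,u_1,u_2\}$, or more cleanly by invoking Farber's theorem that in a bridged graph every ball around a convex set is convex. In particular $B_{k-1}(v)$ is convex, and since $u_1, u_2 \in B_1(v) \subseteq B_{k-1}(v)$, convexity constrains how the $u_1$-$u_2$ arc of $C$ (avoiding $v$) can leave and re-enter $B_{k-1}(v)$. Exploiting this --- by rerouting a segment of the arc through $B_{k-1}(v)$ --- I would either produce a shorter non-well-bridged cycle (contradicting minimality of $|C|$) or directly locate an antipode of $v$ strictly inside $B_{k-1}(v)$, contradicting the assumption $d_G(v,x)=k$ for every antipode $x$.
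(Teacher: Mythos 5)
The paper does not actually prove this statement: it is quoted from Farber--Jamison \cite{fab}, so there is no in-paper argument to compare yours against. Judged on its own merits, your proposal has a sound opening (the minimal-counterexample setup, the correct disposal of $|C|=4$ via the induced isometric $C_4$, and the use of non-isometry of $C$ to find a shortcut $Q$ splitting $C$ into two strictly shorter cycles are all in the spirit of the Farber--Jamison induction), but it is not a proof: the step you yourself flag as ``the main obstacle'' is exactly the step that carries all of the content, and neither of your two suggested repairs closes it.

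Concretely: (i) applying the induction hypothesis to the cycle $C_i$ containing $v$ need not return an antipode at all --- if $v$ is an endpoint of $Q$, or adjacent on $C$ to one, then its neighbours in $C_i$ are no longer $\{u_1,u_2\}$ and may well be adjacent, so well-bridgedness of $C_i$ at $v$ is satisfied by the first clause and yields nothing; (ii) even when it does return an antipode $x'$ of $v$ in $C_i$, that vertex lies at $C_i$-distance $\lfloor |C_i|/2\rfloor < \lfloor |C|/2\rfloor$ from $v$ and may be an interior vertex of $Q$ rather than a vertex of $C$, so the inequality $d_G(v,x')<d_{C_i}(v,x')$ says nothing about any antipode of $v$ in $C$; ``extracting'' such an antipode is precisely the assertion to be proved, and no mechanism for doing so is given; (iii) $C_1$ and $C_2$ need not even be cycles unless $Q$ is chosen internally disjoint from the relevant arcs, which requires a careful choice of the pair $(a,b)$. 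The fallback via Farber's theorem that neighbourhoods of convex sets are convex is also delicate: in \cite{faber,fab} that local-convexity characterization is developed together with (indeed, partly from) the well-bridged-cycle lemma, so invoking it here risks circularity, and in any case the sentence about ``rerouting a segment of the arc through $B_{k-1}(v)$'' gestures at an argument rather than giving one. As it stands the proposal is an honest plan with the decisive step missing.
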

  Using this result, we have the following Lemma that bridged graphs are diamond-weakly modular graphs.
 \begin{lemma}\label{BrDWM}
  Bridged graphs satisfy $(TDC)$ and hence are diamond-weakly modular graphs.
\end{lemma}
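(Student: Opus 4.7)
The plan is to verify $(TDC)$ directly in a bridged graph $G$, combining the weak modularity of bridged graphs with the well-bridgedness of every cycle supplied by Theorem~\ref{well-bridged}.

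Fix vertices $u,v,w$ satisfying the hypothesis $vw\in E(G)$ and $k:=d(u,v)=d(u,w)\geq 2$. Since bridged graphs are weakly modular, the triangle condition $(TC)$ produces a common neighbor $z$ of $v$ and $w$ with $d(u,z)=k-1$, supplying the $z$ required by $(TDC)$. For the vertex $x$ I would proceed as follows, with $y$ obtained by the symmetric argument. Choose $x_0$ to be the neighbor of $v$ on a shortest $u$-$v$ path, so $x_0\in M_v:=N(v)\cap B_{k-1}(u)$. If $x_0$ is adjacent to $w$ or to $z$, then the triangle $vwz$ together with $x_0$ spans a diamond and we set $x:=x_0$. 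Otherwise I would pick an analogous $y_0\in M_w$ and close the path $x_0\,v\,w\,y_0$ into a cycle $C$ by a shortest $y_0$-$x_0$ path $P$ lying inside the ball $B_{k-1}(u)$, which is convex in a bridged graph so that such a $P$ exists.

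Theorem~\ref{well-bridged} then applies: $C$ is well-bridged. Reading off the well-bridgedness condition at $v$ gives either the desired adjacency $x_0\sim w$ directly, or a genuine shortcut from $v$ to an antipode of $v$ on $C$. I would exploit this shortcut, together with the bridged property (no induced $C_4$ or $C_5$), to locate a vertex $x\in M_v$ adjacent to $w$ or to $z$, which is the $x$ demanded by $(TDC)$. The symmetric argument centred at $w$ yields the required $y$, so $(TDC)$ holds and $G$ is diamond-weakly modular.

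The main obstacle is this extraction step: converting the abstract shortcut supplied by well-bridgedness into an explicit $x\in M_v$ with the right adjacency. I expect the cleanest treatment is by induction on the length of the inner path $P$ (equivalently, on $d(x_0,y_0)$ inside $B_{k-1}(u)$): the base cases $|P|\in\{1,2\}$ reduce to analysing the induced chords of a $C_4$ or $C_5$ through $x_0,v,w,y_0$ (both of which are forbidden as induced subgraphs in a bridged graph), while the inductive step uses the shortcut endpoint to replace $x_0$ by a strictly better candidate inside $M_v$, and the convexity of $B_{k-1}(u)$ keeps the construction inside the ball at each iteration.
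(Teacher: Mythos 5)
Your proposal correctly identifies the key tool --- Theorem~\ref{well-bridged}, that every cycle in a bridged graph is well-bridged --- but the cycle you build does not let that tool do its work, and the step you yourself flag as ``the main obstacle'' is a genuine gap, not a routine verification. Your cycle $C$ is $x_0\,v\,w\,y_0$ closed off by a path $P$ inside the ball $B_{k-1}(u)$, so $C$ lives entirely in the two distance levels $k-1$ and $k$ from $u$. When you read off well-bridgedness at $v$, the alternative to ``$x_0\sim w$'' is a shortcut $d_G(v,a)<d_C(v,a)$ to an antipode $a$ of $v$ on $C$; but $a$ lies on $P\subseteq B_{k-1}(u)$ and may well satisfy $d_G(v,a)\in\{1,2\}$, so this alternative is very much alive and yields no contradiction and no adjacency. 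You are then left proposing an induction on $|P|$ that is only sketched: the first edge of the shortcut need not land in $M_v$, the ``strictly better candidate'' is not exhibited, and the base cases already require nontrivial chord analysis. Moreover, replacing $x_0$ by ``some'' better $x\in M_v$ proves only an existential statement over predecessors, whereas the condition $(TDC)$ (as it is used later, e.g.\ in the proof of Theorem~\ref{TDC}) asks for the diamond with the given neighbours $x$ of $y$ and $x'$ of $v$ on shortest paths to $u$.

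The paper's proof avoids all of this by choosing the cycle to pass through $u$ itself: $C$ is $x\,y\,z\to u\to x$, obtained by following a shortest $z$--$u$ path and a shortest $u$--$x$ path, so $|C|=2m$ and the unique antipode of $y$ in $C$ is $u$ with $d_C(y,u)=m=d_G(y,u)$. The shortcut alternative in the well-bridgedness dichotomy is therefore \emph{impossible} at $y$, and the only remaining option is that the two neighbours of $y$ on $C$, namely $x$ and $z$, are adjacent; the same argument at $v$ gives $x'\sim z$. That single observation --- route the cycle through $u$ so that the antipode of the triangle vertex is $u$ and no shortcut can exist --- is the idea your proposal is missing, and without it the extraction step you defer cannot be completed along the lines you describe.
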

  \begin{proof}
  Suppose $G$ is a bridged graph. Since  bridged graphs are weakly modular $G$  satisfies $(TC)$. Let $u$, $y$, $v$ be the vertices such that $1 = d(v,y) < d(u,v) = d(u,y)=m$ and let $z$ be the vertex adjacent to both $v$ and $y$ with $d(u,z) = m-1$. Suppose $x$ be the vertex adjacent to $y$ on $I(u,y)$ and $x'$ be the vertex  adjacent to $v$ on $I(u,v)$. Consider the cycle $C$: $xyz\rightarrow u \rightarrow x$ and has length at least four. Since $G$ is bridged, the cycle $C$ is well bridged by Theorem~\ref{well-bridged}. Consider the vertex $y$, then  either the neighbors of $y$ in $C$ are  adjacent, or $d_G(y,a)< d_C(y,a)$ for some antipode $a$ of $y$ in $C$. But we have antipode of $y$ in $C$ is $u$ and by assumption  $d_G(u,y)= d_C(u,y) =m$.  Then the only possibility is that the neighbors of $y$ in $C$ are  adjacent.  That is, $x$ and $z$ are  adjacent. Similarly, we can prove that $x'$ and $z$ are  adjacent. Hence $G$ satisfies $(TDC)$ and hence $G$ is a diamond-weakly modular graph. 
 \end{proof}
 
Clearly, the class of bridged graphs form a strict subclass of diamond-weakly modular graphs. For example, $W_4$ and $W_5$ are diamond-weakly modular, but they are not bridged graphs.

 Weakly bridged graphs are superclass of bridged graphs. In \cite{CCHO}, weakly bridged graphs are defined as weakly modular graphs without an induced cycle of length four. Next Lemma shows that weakly bridged graphs are diamond-weakly modular graphs.
\begin{lemma}\label{wbrid}
 Weakly bridged graphs are diamond-weakly modular graphs.
\end{lemma}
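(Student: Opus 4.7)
The plan is to follow the same broad template as the proof of Lemma~\ref{BrDWM} but replace the well-bridged property of cycles (which is specific to bridged graphs) with the single structural feature that defines weakly bridged graphs, namely the absence of an induced $C_4$. Since a weakly bridged graph is weakly modular by definition, the quadrangle condition $(QC)$ is free, so only $(TDC)$ needs verification. Fix $u,v,w$ with $1=d(v,w)<d(u,v)=d(u,w)=m$; the triangle condition $(TC)$ (which holds because $G$ is weakly modular) supplies a common neighbor $z$ of $v,w$ with $d(u,z)=m-1$. Pick a neighbor $x$ of $v$ on a shortest $u,v$-path and a neighbor $y$ of $w$ on a shortest $u,w$-path, so $d(u,x)=d(u,y)=m-1$. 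It suffices to prove $x\sim z$ and $y\sim z$; by symmetry I focus on $x\sim z$.

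Assume for contradiction that $x\not\sim z$. Since $d(x,z)\le d(x,v)+d(v,z)=2$, this forces $d(x,z)=2$. Now I apply $(QC)$ with its four arguments instantiated as $(u,x,z,v)$: one checks $d(x,v)=d(z,v)=1$, $d(x,z)=2$, and $d(u,x)=d(u,z)=m-1=d(u,v)-1$, so all hypotheses hold. The conclusion is a common neighbor $z'$ of $x$ and $z$ with $d(u,z')=m-2$. Then $z'\neq v$ (their distances from $u$ differ) and $z'\not\sim v$, because $z'\sim v$ would force $d(u,v)\le d(u,z')+1=m-1<m$. Hence $x,v,z,z'$ are four distinct vertices inducing a cycle of length four, contradicting the defining property of weakly bridged graphs. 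Therefore $x\sim z$; running the same argument on the pair $(y,z)$ yields $y\sim z$. Combined with the triangle $vwz$ from $(TC)$, this provides the two diamond configurations $\{v,w,z,x\}$ and $\{v,w,z,y\}$ demanded by $(TDC)$.

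I expect the main obstacle to be purely bookkeeping: lining up the relabeled vertices so the premises of $(QC)$ are satisfied exactly, and confirming that the four vertices produced are distinct and non-adjacent in the right pattern so that a genuine \emph{induced} $C_4$ arises (otherwise the weakly bridged hypothesis gives no contradiction). Degenerate situations, in particular $m=1$ (where $z$, $x$, $y$ would all collapse onto $u$), are trivial and can be dispatched in a single sentence before the main argument.
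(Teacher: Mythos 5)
Your argument is correct and follows essentially the same route as the paper's proof: assume $x\not\sim z$, apply $(QC)$ to the distance-two pair $\{x,z\}$ with apex $v$ and far vertex $u$, and read off an induced $C_4$ from the resulting vertex $z'$ (you are in fact more careful than the paper in verifying the cycle is induced). The only degenerate case needing a word is $m=2$ rather than $m=1$ (which is vacuous): there the literal hypothesis $2\le d(u,x)$ of $(QC)$ fails, but $z'=u$ serves directly, exactly the situation the paper flags in its parenthetical remark.
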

\begin{proof}
Suppose $G$ is a weakly bridged graph which is not a diamond-weakly modular graph. Since $G$ is weakly modular, it satisfies $(TC)$, but it does not satisfy $(TDC)$. Let $u$, $y$, $v$ be the vertices such that $1 = d(v,y) < d(u,v) = d(u,y)=m$ and let $z$ be the vertex adjacent to both $v$ and $y$ with $d(u,z) = m-1$. Suppose $x$ be the vertex adjacent to $y$ on $I(u,y)$ and $x'$ be the vertex adjacent to $v$ on $I(u,v)$.  We may suppose that $z$ is not adjacent to $x$. Consider the cycle $C$: $xyz\rightarrow u \rightarrow x$, which has a length of at least four. Now consider the vertices $x$, $z$ and $u$, $2 = d(x, z)$ and $ d(u,x) = d(u,z)=m-1$. Apply $(QC)$ on $x$, $z$ and $u$, we  have a $w$ with $w$  adjacent to both $x$ and $z$ with $d(u,w) = m-2$. (Note that $w$ may be equal to $u$ if  cycle $C$ has length four). Then the vertices $y,x,z,w$ will induce a cycle of length four, a contradiction to the assumption that $G$ is weakly bridged.
\end{proof}

So far, we have  modular graphs, bridged graphs,  weakly bridged graphs, the graphs $W_4$ and $W_5$ are diamond-weakly modular graphs. Note that there are other diamond-weakly modular graphs which do not belong to any of these graphs and some of them are given in figures~\ref{dwm}.

\begin{figure}
\centering
\includegraphics[height=30mm]{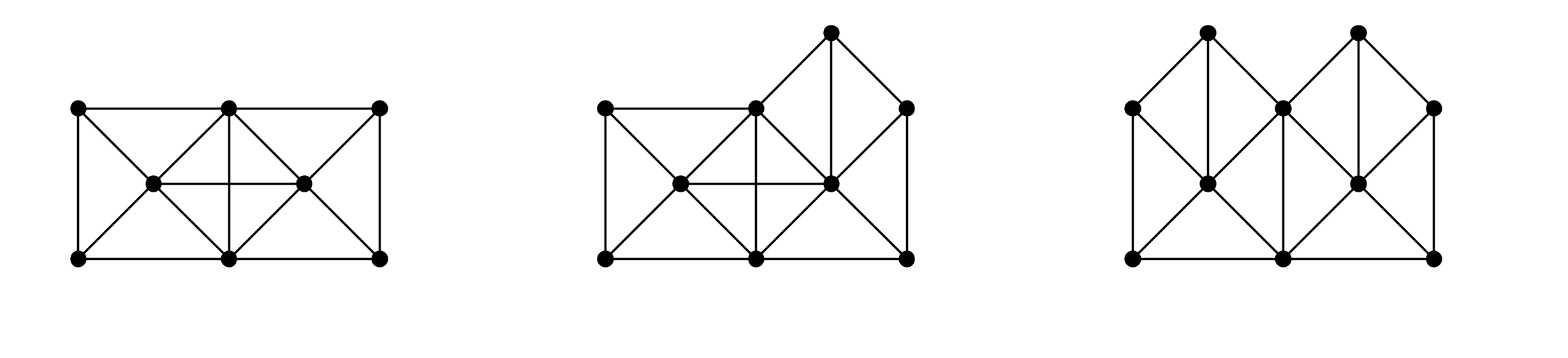}
\caption{diamond-weakly modular graphs}
\label{dwm}
\end{figure}

Besides these graphs, the graphs  obtained by identifying an  outer edge of $W_5$ with an edge in $K_n$ are diamond-weakly modular graphs. The graphs  obtained by identifying an outer edge of $W_4$ with an edge $uv$  in $K_n$ and making the central  vertex of $W_4$ adjacent to all the vertices in $K_n $ other than the vertices $u$ and $v$ are also diamond-weakly modular. Clearly, the graphs formed from any  vertex  identification of any of the different combination of $W_4$ and $W_5$ with  $W_4$, $W_5$ and bridged graph are diamond-weakly modular. 
    
 Let $G_1$ be $W_4$ or $W_5$ and $G_2$ be $W_4$, $W_5$ or bridged graph. Let $G_0$ be any isomorphic subgraph of both $G_1 $ and $G_2$. Now consider the graph $G$ obtained by identifying  $G_0$ of $G_1$ and $G_2$ and making all the vertices in $V(G_1)\setminus V(G_2)$  adjacent to all the vertices in $V(G_2)\setminus V(G_1)$. Then $G$ is diamond-weakly modular. \\
 
  Let $y \in V(G)$. Then a vertex $x$ in the set $S \subset V$ is a \emph{gate} in $S$ for $y$, if $x \in I(y,w)$ for each $w \in S$. The set $S$ is said to be \emph{gated} if every vertex in $G$ has a gate in $S$. It is clear that for a vertex $s\in S$, $s$ itself is the gate of $S$. That is, $S$ is gated if and only if every vertex $y\notin S$ has a gate in $S$. From the definition of a gated set $S$, it follows easily that the intersection of two gated sets is gated. The subgraph induced by a gated set $S$ is usually called a gated subgraph of $G$. \emph{Gated amalgamation} is an operation that can be performed on gated subsets (subgraphs) to form new gated sets (subgraphs). Gated amalgamation is well known operation in metric graph theory \cite{bandelt}.  It is defined for two gated sets $G_1$ and $G_2$ of $G$ as follows. 
Let $G_1$ and $G_2$ be gated subgraphs of a graph $G$ such that $G_1\cup G_2$ = $G$ and $G_1\cap G_2\neq \emptyset$. If in addition there are no edges between $G_1 \setminus G_2$ and $G_2 \setminus G_1$ then $G$ is a gated amalgam of $G_1$ and $G_2$. Equivalently, let $H_1$ be a gated subgraph of a graph $G_1$ and $H_2$ a gated subgraph of $G_2$, where $H_1$ and $H_2$ are isomorphic graphs. Then the gated amalgam of $G_1$ and $G_2$ is obtained from $G_1$ and $G_2$ by identifying their subgraphs $H_1$ and $H_2$. 
 \begin{theorem}
 Diamond-weakly modular graphs are closed under gated amalgamations.
\end{theorem}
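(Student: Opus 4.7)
The plan is to check that the gated amalgam $G$ of two diamond-weakly modular graphs $G_1$ and $G_2$ with intersection $H=G_1\cap G_2$ satisfies both the quadrangle condition and the triangle diamond condition. That $(QC)$ transfers is routine: gated amalgams of weakly modular graphs are weakly modular, so $(QC)$ passes to $G$ without further work. All the effort is in $(TDC)$. Throughout I lean on the standard properties of a gated amalgam: each $G_i$ is an isometric subgraph of $G$; $H$ is gated in $G$; the gate in $G_1$ of any vertex of $G_2\setminus G_1$ coincides with its gate in $H$ (and hence lies in $H$); and there are no edges of $G$ between $G_1\setminus H$ and $G_2\setminus H$.

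Take a triple $u,v,w$ with $1=d(v,w)<d(u,v)=d(u,w)=m$. Since $vw$ is an edge, both endpoints lie in some common $G_i$; say $v,w\in G_1$. If $u\in G_1$, then $(TDC)$ applied in $G_1$ furnishes $z,x,y\in G_1$, and I check that no vertex of $G_2\setminus G_1$ can supply an unexpected neighbor of $v$ on $I_G(u,v)$ (any such would force $v\in H$, and its gate in $G_1$ would pin its distance from $u$ to $d(u,v)+1$, a contradiction). The main case is $u\in G_2\setminus G_1$. Let $u^{*}$ be the gate of $u$ in $G_1$; then $u^{*}\in H$ and $d(u,a)=d(u,u^{*})+d(u^{*},a)$ for every $a\in G_1$, forcing $d(u^{*},v)=d(u^{*},w)\ge 1$ and $u^{*}\notin\{v,w\}$. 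If $d(u^{*},v)\ge 2$ I apply $(TDC)$ in $G_1$ to $(u^{*},v,w)$, obtaining $z\in G_1$ with $d_{G_1}(u^{*},z)=d(u^{*},v)-1$, so that $d_G(u,z)=m-1$ as required. If $d(u^{*},v)=1$, the gate equation also forces $u^{*}\sim w$, so $u^{*}$ is itself a common neighbor of $v,w$ at distance $m-1$ from $u$; I take $z:=u^{*}$. In either subcase, any candidate diamond-neighbor $x_T$ of $v$ with $d(u,x_T)=m-1$ that lies in $G_1$ satisfies, via the gate equation, $d(u^{*},x_T)=d(u^{*},v)-1$, and is therefore adjacent to $z$ by $(TDC)$ in $G_1$ (or forced to equal $u^{*}=z$, excluded).

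The genuinely delicate situation is $x_T\in G_2\setminus G_1$, which can occur only when $v\in H$. The key preliminary step is to show $w\in H$ as well: if $w\in G_1\setminus H$, the gate of $w$ in $H$ must equal $v$ (the unique neighbor of $w$ in $H$ guaranteed by gatedness), yielding $d(u^{*},w)=d(u^{*},v)+1$ and contradicting the gate equality $d(u^{*},w)=d(u^{*},v)$. With $v,w\in H\subseteq G_2$, I apply $(TDC)$ in $G_2$ to $(u,v,w)$ to obtain $z_2\in G_2$ common to $v,w$ with $d_{G_2}(u,z_2)=m-1$ and the diamond property in $G_2$; a short gate argument applied to $z_2$ in $H$ (using that its neighbors $v,w$ both lie in $H$) forces $z_2\in H$. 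Reselecting $z:=z_2\in H$ in $G$, the diamond condition is covered uniformly: $G_2$-side neighbors of $v$ by the diamond in $G_2$, and $G_1$-side neighbors by the earlier analysis (such a neighbor at distance $m-1$ from $u$ collapses, via the gate, to $u^{*}\in H$, which in turn is adjacent to $z_2$ by the same $(TDC)$ in $G_2$). The $y$-side argument for neighbors of $w$ is entirely symmetric. The hardest step I expect is precisely this last one: ensuring that the $(TDC)$-witness from $G_2$ lands in $H$, so that a single choice of $z$ simultaneously serves both $G_1$- and $G_2$-side neighbors of $v$; this rests on the fact that any common neighbor of $v,w\in H$ is itself forced into $H$ by the gated structure, without which the no-edges-across condition could block the required edge $x_Tz$.
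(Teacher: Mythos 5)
Your proof is correct and rests on the same central idea as the paper's: reduce to the case where $u$ and the edge $vw$ lie in different factors, pass to the gate $u^{*}$ of $u$ in the factor containing the edge, and transfer $(TDC)$ across via the gate equality $d(u,a)=d(u,u^{*})+d(u^{*},a)$. You go further than the paper in verifying the diamond part of $(TDC)$ when potential witnesses $x_T$ straddle the boundary $H=G_1\cap G_2$ (forcing $v,w\in H$ and the $(TDC)$-witness $z$ into $H$), a point the paper's proof passes over as ``clear''; this added care is sound and strengthens the argument rather than changing its route.
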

\begin{proof}
Let $G_1$ and $G_2$ be two diamond-weakly modular graphs and $G$ be the graph obtained by the gated amalgamation of $G_1$ and $G_2$. Let  $H_1\sim H_2 =G_0$ be the isomorphic gated subgraphs of $G_1$ and $G_2$ which are identified to obtain $G$. Now consider the vertex $u$ and the edge $vy$ in $G$ with  $d(u,v)=d(u,y)=m$. Then the edge $vy$ lies either  in $G_1 $ or in $G_2 $, since there are no edges between $G_1 \setminus G_2$ and $G_2 \setminus G_1$. Clearly,  these three vertices satisfies $(TDC)$ if they all lie either in $G_1$ or in   $G_2$. Suppose the vertex $u$ lies in $G_1 \setminus G_2$ and the edge $vy$ lies in $G_2 \setminus G_1$. Let $g(u)$ be the gate of $u$ in $G_2$. Then $g(u)$ will be the unique vertex in $G_2$ closest to $u$ and $g(u)\in I(u,v) \cap I(u,y)$. Since $d(u,v)=d(u,y)=m$ and  $g(u)$ is the gate of $u$ in $G_2$, we get, $d(g(u),v) = d(g(u),y)$. Then the vertices $v$, $y$, $g(u)$, with $d(v,y)=1$ and $d(g(u),v) = d(g(u),y)$ satisfies $(TDC)$ on $G$, since, they lie in the diamond-weakly modular graph $G_2$. Hence the vertices $u$, $v$, $y$ satisfy $(TDC)$ on $G$. Similarly, we can prove that $G$ satisfies $(QC)$.   
\end{proof}

Note that, diamond-weakly modular graphs are not closed under Cartesian products. For example, consider the graph $K_3 \Box K_2$. It follows that the graph $K_3\Box K_2$ (the prism graph) is a weakly modular graph that doesn't satisfy $TDC$.\\ We also have  the following proposition.

\begin{proposition}\label{gdwm}
    A graph $G$ is diamond-weakly modular then $G$ is weakly modular and if $G$  contain $ C_5,W_4^-$ or house as induced subgraphs then there exists a vertex adjacent to all the five vertices of these subgraphs. 
\end{proposition}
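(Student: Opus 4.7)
The weakly modular claim is immediate from the definitions: the witness $z$ produced by $(TDC)$ satisfies exactly the conclusion of $(TC)$, so $(TC)$ follows for free, and $(QC)$ is part of the hypothesis that $G$ is diamond-weakly modular. Hence $G$ satisfies both $(TC)$ and $(QC)$.

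For the second assertion I plan to use a single template for all three induced subgraphs. Given an induced copy $H$ in $G$ of $C_5$, $W_4^-$, or house, first pick a vertex $u \in V(H)$ and an edge $vw \in E(H)$ with $uv, uw \notin E(H)$; the induced hypothesis gives $d_G(u,v), d_G(u,w) \geq 2$, and the length-$2$ paths inside $H$ from $u$ to $v$ and to $w$ give $d_G(u,v) = d_G(u,w) = 2$. Apply $(TDC)$ to $(u,v,w)$ to produce a common neighbor $z$ of $v,w$ with $d(u,z) = 1$, so that $z$ is already adjacent to $u$, $v$, and $w$. Then exploit the diamond part of $(TDC)$ with the natural witnesses $x$ (an $H$-neighbor of $v$ at distance $1$ from $u$) and $y$ (an $H$-neighbor of $w$ at distance $1$ from $u$). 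Four edges $vw, vz, wz, xv$ of the diamond $\{v, w, z, x\}$ are already in place; the induced hypothesis makes whichever of $xw, xz$ is a non-edge of $H$ also a non-edge of $G$, which forces the remaining pair to be the fifth edge. A symmetric conclusion holds for $y$.

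Concretely, for the induced $C_5 = v_1 v_2 v_3 v_4 v_5 v_1$ I take $u = v_1$, $(v,w) = (v_3, v_4)$, $x = v_2$, $y = v_5$: the non-edges $v_2 v_4$ and $v_3 v_5$ of the induced $C_5$ force $v_2 z$ and $v_5 z$ to be edges, so $z$ is adjacent to all five. For an induced house (square $v_1 v_2 v_3 v_4$ with roof triangle $v_1 v_4 v_5$) I take $u = v_5$, $(v,w) = (v_2, v_3)$, $x = v_1$, $y = v_4$: the square's diagonals $v_1 v_3$ and $v_2 v_4$ are the non-edges of $H$ appearing in the two diamonds, forcing $v_1 z$ and $v_4 z$, and again $z$ is adjacent to all five. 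For $W_4^-$ the same template applies, with an analogous choice of tip vertex and opposite edge based on the $W_4^-$ structure.

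The main obstacle is making the diamond part of $(TDC)$ usable with the natural in-$H$ witnesses $x, y$ rather than with some exterior pair. This requires reading $(TDC)$ as asserting that, once $z$ is fixed, the sets $\{v,w,z,x\}$ and $\{v,w,z,y\}$ are induced diamonds for every pair $(x,y)$ obeying $d(x,v)=d(y,w)=1$ and $d(u,x)=d(u,v)-1$, $d(u,y)=d(u,w)-1$ — the same reading that is used implicitly in the proof of Lemma~\ref{BrDWM}. Under this reading the argument collapses to the short per-subgraph check above; if $(TDC)$ were read purely existentially in $x, y$, one would need an additional step excluding exotic witnesses outside $H$.
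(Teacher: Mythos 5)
Your proposal is correct and follows essentially the same route as the paper: the paper likewise notes that weak modularity is immediate, applies $(TDC)$ to a vertex $u$ and an opposite edge of the induced $C_5$ at distance $2$ with the two in-$C_5$ neighbours playing the roles of $x$ and $x'$ in the diamond clause, and dismisses the house and $W_4^-$ as ``similar.'' The quantifier reading of $(TDC)$ you flag is indeed the one the paper uses implicitly (both here and in Lemma~\ref{BrDWM}), so your extra care about the induced non-edges forcing the fifth diamond edge only makes the argument more explicit, not different.
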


\begin{proof}
Suppose  $G$ is a diamond-weakly modular graph. Then clearly, $G$ is a weakly modular graph. Assume $G$ contains $ C_5$  as an induced subgraph with $u,x,y,v,x'$ as its adjacent vertices. For the vertex $u$ and the edge $yv$ with $d(u,y)=d(u,v)=2$, there is a vertex $z$ adjacent to all the $y,v, x, x'$ and $u$, since $G$ is diamond-weakly modular.  So there exists a vertex $z\in G$ such that $z$ is adjacent to all the vertices of $C_5$. Similar situation hold in the case of induced $ W_4^-$ and house.
\end{proof}
The converse of above Proposition~\ref{gdwm}  need not  be true and is illustrated in Figure~ \ref{notdwm}
\begin{figure}
\begin{tabular}{ccc}
 \begin{minipage}{0.30\textwidth}
 \includegraphics[width=\textwidth]{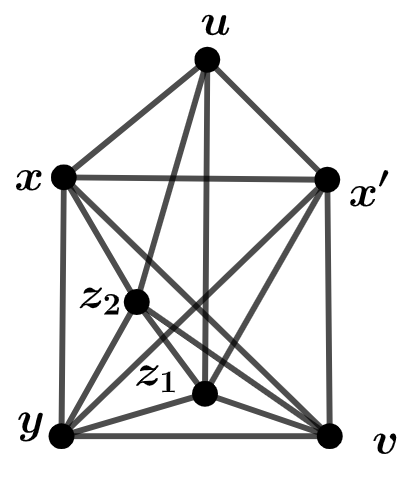}\label{notdwm}
 \end{minipage} & \quad\qquad
 &\begin{minipage}{0.60\textwidth}
 \caption{ The graph in the figure is weakly modular and for the induced subgraphs, $C_5$, house, and $W_4^-$, there exists a vertex adjacent to all the five vertices of these subgraphs. But it is not diamond-weakly modular, since the vertex $u$ and the edge $yv$ satisfies $(TC)$ but not $(TDC)$. Here $z_1$ is not adjacent to $x$ and $z_2$ is not adjacent to $x'$. Also, the vertices $u,x,y,z_1$ and $v$ induce a house and $z_2$ is a vertex adjacent to all the five vertices of the house. But still, the graph do not satisfy $(TDC)$.}
 \end{minipage}
\end{tabular}
\end{figure}

\section{ Betweenness axioms for arbitrary transit function and their implications} 

In this section, we consider a transit function $R$ defined on a non-empty finite set $V$ and discuss the axioms and their relationships for $R$. These axioms will be used for the characterization of diamond-weakly modular graphs using the interval function as well as the axiomatic characterization of the interval function of diamond-weakly modular graphs, bridged graphs and weakly bridged graphs using an arbitrary transit function $R$. \\

The following three axioms denoted as $(b2)$, $(b3)$, and $(b4)$ together with the defining transit axioms $(t1)$, $(t2)$  are  essential and considered by Nebesk\'{y} and Mulder in the characterizations of the interval function $I$ of a graph. \\ 
	
	\begin{tabular}{lclclc} 
		& $(b2)$ &if $x\in R(u,v)$ and $y\in R(u,x)$, then $y\in R(u,v)$, & \\ 
		& $(b3)$ &if $x\in R(u,v)$ and $y\in R(u,x)$, then $x\in R(y,v)$, & \\
		& $(b4)$ &if $x\in R(u,v)$, then $R(u,x)\cap R(x,v)= \{x\}$. & 
	\end{tabular}\\
	
\noindent A transit function $R$ satisfying axioms $(b2)$ and $(b3)$ is known as a \emph{geometric transit function}. 
Bandelt and Chepoi et al. in \cite{ch-helly} introduced the axiom known as $(ta)$ for proving a necessary condition for a  geometric transit function to be the interval function of a connected graph. Further in \cite{ch-helly}, it is  proved that the axiom $(ta)$ is satisfied by the interval function of a weakly modular graph. 

\begin{description}
\item[$(ta)$:] if  $ R( u,v) \cap R( u,w) =\{  u \} , R( u,v) \cap R( v,w) =\{  v \}, R( u,w) \cap R( v,w) =\{  w \} $ and $ R( u,v) = \{ u,v  \}  $, then $  R( u,w) = \{ u,w  \}   $ and $  R( v, w) = \{ v, w  \}  $, for all  $ u,v,w \in V $.
\end{description}
The following axiom $(J{0})$ is due to Sholander in his attempt to prove the interval function of trees in \cite{Sholander-52} during 1952. In \cite{Changat-22}, this axiom is shown to be a characterizing axiom for the interval function of a Ptolemaic graph, see also \cite{ch-lek-pr-2022}. 

\begin{description}
\item[$(J{0})$:] if  $x\in R(u,y), y\in R(x,v)  $, then $ x\in R(u,v)$, for all distinct $u,v,x,y\in V$.
\end{description} 
We consider a slightly modified version of the axiom $(J0)$, defined  as follows;

 \begin{description}
\item[$(J0')$:] if $x\in R(u,y), y\in R(x,v), R(u,y)\cap R(x,v) \subset \{u,x,y,v\}$, then $ x\in R(u,v)$,  for all distinct  $u,x,y,v \in V$.
\end{description}
We prove that this axiom is a characterizing axiom of the  diamond-weakly modular graphs in Section~\ref{interval_function}, where 
we give a characterization of  diamond-weakly modular graphs $G$ using its interval function $I_G$ and also provide an axiomatic characterization of $I_G$ in terms of a set of first order axioms on an arbitrary transit function $R$. 

From the definitions of the axioms $(J0)$ and $(J0')$, we have that the axiom $(J0)$ implies axiom  $(J0')$, while the reverse implication is not true.  In other words, axiom  $(J0')$ is a weaker axiom than $(J0)$.  Example~\ref{j0'nj0} shows that  $(J0')\not\Rightarrow (J0)$.
\begin{example} For a transit function $R$ on $V$, {$(J0')\not\Rightarrow (J0)$}\label{j0'nj0}$~$\\
	Let $V=\{a,b,c,d,e\}$.  Let $R:V\times V\rightarrow 2^V$ defined as follows. $R(a,e)=\{a,e\}, R(a,b)=\{a,b\}, R(b,e)=\{b,e\}, R(b,c)=\{b,c\}, R(c,e)=\{c,e\},R(c,d)=\{c,d\}, R(d,e)=\{d,e\},$ $R(a,c)=\{a,b,c,e\}$, $R(a,d)=\{a,e,d\},$ $R(b,d)=\{b,c,d,e\}$, $R(x,y)=R(y,x)$ and $R(x,x)=\{x\} $, for all $x,y$ in $V$.  We can see that $b\in R(a,c)$ and  $c\in R(b,d)$ but $b\notin R(a,d)$, so that $R$ does not satisfy $(J0)$ axiom.  We can see that there exists no $u,v,x,y$ and $z$ satisfying the assumptions of the axiom $(J0')$ and hence the axiom $(J0')$ follows trivially. 
	
\end{example}

In addition to the axiom  $(J0')$, we consider the axioms  $(br)$ and $(br')$ for a transit function $R$. These axioms are essential for  the characterizations of bridged graphs and weakly bridged graphs, which we prove in Section~\ref{Bridged}. 
	\begin{description}
		 \item[$(br)$:] if $R(x,y)=\{x,y\}, R(x,u)=\{x,u\}, R(v,y)=\{v,y\} $ and $z\in R(u,v)$, then $ R(x,z)=\{x,z\}$ or $R(y,z)=\{y,z\}$, for any $u,v,x,y,z$.
		  \item[$(br')$:] if $ R(u,x)=\{u,x\}, R(x,v)=\{x,v\} $ and $z\in R(u,v)$, then $ R(x,z)=\{x,z\}$, for any $u,v,x,z$.
		
	\end{description}
We now state the Mulder- Nebesk\'{y} Theorem stated in  \cite{mune-09}
characterizing the interval function of an arbitrary connected graph using axioms on an arbitrary transit function. In addition to the transit axioms $(t1), (t2)$ and the betweenness axioms $(b2), (b3), (b4)$, the following axioms $(s1)$ and $(s2)$ are required in the theorem.

\begin{description}
\item[$(s1)$]:   $ R( u,\bar{u}) = \{ u,\bar{u}  \}, R( v,\bar{v}) = \{ v,\bar{v}\}, u\in R( \bar{u}, \bar{v} ) $ and $ \bar{u} , \bar{v} \in R( u,v) $, then  $ v\in R( \bar{u}, \bar{v})  $.
\item[$(s2)$]:  $ R( u,\bar{u}) = \{ u,\bar{u}  \}, R( v,\bar{v}) = \{ v,\bar{v}  \},   \bar{u}  \in R( u,v) $, $ v\notin R( \bar{u}, \bar{v} ) ,  \bar{v}  \notin R( u,v)  $,   then $ \bar{u}  \in R( u,\bar{v} ) $.
\end{description}

\begin{theorem}\label{interval}\cite{mune-09}
  Let $R:V\times V \longrightarrow 2^{V}$ be a function on $V$, satisfying the axioms $(t1), (t2), (b2), (b3),(b4)$  with the underlying graph $G_R$ and let $I$ be the interval function of $G_R$. The following statements are equivalent.
  \begin{itemize}
      \item[$(a)$] $R=I$
      \item[$(b)$] $R$ satisfies axioms $(s1)$ and $(s2)$.
  \end{itemize}
\end{theorem}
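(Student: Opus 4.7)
The plan is to treat the two directions separately; as is typical for axiomatic characterizations of the interval function, the forward direction is essentially free while the reverse direction carries all the difficulty.

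For $(a)\Rightarrow(b)$, once $R=I$ is granted, axioms $(s1)$ and $(s2)$ become assertions about shortest paths in the connected graph $G_R$ and reduce to short distance computations. In each case the hypotheses squeeze the relevant triangle inequalities tight: for $(s1)$, combining $d(u,\bar u)=d(v,\bar v)=1$ with $\bar u,\bar v\in I(u,v)$ and $u\in I(\bar u,\bar v)$ forces $d(\bar u,v)=d(u,\bar v)$, so that $d(\bar u,\bar v)=d(\bar u,v)+d(v,\bar v)$; for $(s2)$ the negative hypotheses $v\notin I(\bar u,\bar v)$ and $\bar v\notin I(u,v)$ pin down $d(u,\bar v)=1+d(\bar u,\bar v)$. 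I would not linger on this direction.

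For the substantive direction $(b)\Rightarrow(a)$, my plan is an induction on $|R(u,v)|$. The base case $R(u,v)=\{u,v\}$ is just the definition of an edge in $G_R$, giving $I_{G_R}(u,v)=\{u,v\}=R(u,v)$. For the inductive step I would first use $(b4)$, together with a minimality argument on the containment order among sets $R(u,x)$, to produce a vertex $\bar u\in R(u,v)\setminus\{u\}$ with $R(u,\bar u)=\{u,\bar u\}$, i.e.\ a genuine neighbour of $u$ in $G_R$ lying on the $R$-interval to $v$. Axioms $(b2)$ and $(b3)$ then let me pass to $R(\bar u,v)$, which is strictly smaller by $(b4)$, so the induction hypothesis identifies $R(\bar u,v)$ with $I_{G_R}(\bar u,v)$. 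The remaining task is to glue the edge $u\bar u$ to this shorter interval and conclude $R(u,v)=I_{G_R}(u,v)$; this is precisely where $(s1)$ and $(s2)$ enter, $(s1)$ synchronising the behaviour at the $u$- and $v$-ends of the interval and $(s2)$ forbidding any ``hidden'' edge from offering an unaccounted shortcut.

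The main obstacle is the circularity built into the statement: $G_R$ is defined from $R$, and one has to prove that shortest paths in $G_R$ recover precisely $R$ without any independent handle on $d_{G_R}$. The concrete danger is an edge $\bar v v$ with $\bar v\notin R(u,v)$ that could make $d_{G_R}(u,v)$ strictly smaller than the length of every $R$-chain inside $R(u,v)$; axiom $(s2)$, applied with the correct choice of $\bar v$, is exactly the statement that rules this out, while $(s1)$ handles the analogous ambiguity when such a vertex lies \emph{inside} $R(u,v)$ and could furnish a parallel geodesic. The delicate bookkeeping is to schedule the induction so that every invocation of $(b2),(b3),(b4)$ stays strictly within smaller intervals, and every invocation of $(s1),(s2)$ produces new vertices that feed back cleanly into the induction hypothesis rather than restarting the same argument.
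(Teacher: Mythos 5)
You should first note that the paper does not prove this statement at all: Theorem~\ref{interval} is imported verbatim from Mulder and Nebesk\'{y} \cite{mune-09} and used as a black box, so there is no in-paper argument to compare yours against. Judged on its own, your treatment of $(a)\Rightarrow(b)$ is fine and complete in spirit: setting $k=d(u,v)$, the hypotheses of $(s1)$ force $d(\bar u,v)=d(u,\bar v)=k-1$ and $d(\bar u,\bar v)=k$, whence $v\in I(\bar u,\bar v)$; for $(s2)$ the two negative hypotheses pin $d(u,\bar v)=k$ and $d(\bar u,\bar v)=k-1$, whence $\bar u\in I(u,\bar v)$. These computations are exactly as routine as you claim.

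The gap is in $(b)\Rightarrow(a)$, where everything you write after ``the remaining task is to glue'' is a description of what a proof would have to do rather than a proof. Two specific points are left unexecuted. First, your induction is on $|R(u,v)|$, but the conclusion you need concerns $d_{G_R}(u,v)$, and before the theorem is established there is no a priori relation between the two; the published argument has to set up an induction that simultaneously controls both (it proceeds through a chain of lemmas establishing that every maximal $(b2)$--$(b4)$ chain in $R(u,v)$ from $u$ to $v$ has the same length and that this length equals $d_{G_R}(u,v)$), and your sketch does not say which quantity actually decreases when $(s1)$ or $(s2)$ manufactures a new vertex. Second, the two dangers you correctly identify --- an external neighbour $\bar v$ of $v$ creating a shortcut (so that $I(u,v)\not\subseteq R(u,v)$), and a geodesic neighbour of $u$ not lying in $R(u,v)$ --- each require a genuine case analysis in which $(s1)$ and $(s2)$ are applied to carefully chosen quadruples $u,\bar u,v,\bar v$, and this case analysis is the mathematical content of the theorem (it occupies several pages in \cite{mune-09}). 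Naming the axioms that ``enter at this point'' is not the same as showing that they suffice. As it stands the proposal is a correct road map for the known proof, not a proof.
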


The axiom $(ta)$ defined by Chepoi et al. in \cite{ch-helly} is needed for the characterization of $I_G$ of these graph classes (diamond-weakly modular,  bridged graphs and weakly bridged graphs). Also, it is proved in \cite{ch-helly} that  discrete geometric interval space $X$ satisfying the $(ta)$ axiom is graphic. In our terminology, it means that if $R$ satisfies axioms $(b2)$, $(b3)$ and $(ta)$ then $R$ will coincide with the interval function of the underlying graph $G_R$.

Though, we give another proof establishing the dependency of the axioms $(b2)$, $(b3)$,  $(ta)$, $(s1)$ and $(s2)$.

\begin{lemma}\label{s1s2}
Let $ R $ be a transit function on a nonempty set $V$  satisfies the axioms $ (b{2}) $ and $ (b{3}) $ then for any $u,v$ and $w$, there exists   $ x\in R( u,v) \cap R( u,w) $ such that  $ R( x,v) \cap R( x,w) = \{x\}$.
\end{lemma}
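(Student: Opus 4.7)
My plan is to prove the lemma by a maximality argument. Fix $u, v, w \in V$, set $S := R(u,v) \cap R(u,w)$, and note $S \neq \emptyset$ since $u \in S$ by $(t1)$. I will define a binary relation $\preceq$ on $V$ by declaring $x \preceq x'$ iff $x \in R(u, x')$. The idea is to check that $\preceq$ is a partial order and then take $x$ to be any $\preceq$-maximal element of $S$, which exists because $V$ is finite; I will show that this $x$ satisfies $R(x,v) \cap R(x,w) = \{x\}$.

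The first step is to verify that $\preceq$ is a partial order. Reflexivity is immediate from $(t1)$, and transitivity is a direct restatement of $(b2)$. The only delicate point is antisymmetry: if $x \in R(u, x')$ and $x' \in R(u, x)$, then applying $(b3)$ with the first inclusion playing the role of ``$x \in R(u,v)$'' and the second playing the role of ``$y \in R(u,x)$'' yields $x \in R(x', x')$, which equals $\{x'\}$ by $(t3)$, forcing $x = x'$.

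The final step is the maximality contradiction. Suppose there exists $y \in R(x,v) \cap R(x,w)$ with $y \neq x$. First, I claim $y \in S$: using $(t2)$ we rewrite the assumptions as $x \in R(v,u)$ and $y \in R(v,x)$, and then $(b2)$ gives $y \in R(v,u) = R(u,v)$; the same argument with $w$ in place of $v$ gives $y \in R(u,w)$. Second, I claim $x \preceq y$: applying $(b3)$ to $x \in R(v,u)$ and $y \in R(v,x)$ delivers $x \in R(y,u) = R(u,y)$, which is precisely $x \preceq y$. Combined with $y \neq x$, this contradicts the $\preceq$-maximality of $x$, and hence $R(x,v) \cap R(x,w) = \{x\}$.

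The main obstacle I anticipate is establishing antisymmetry of $\preceq$ with only the axioms at our disposal, since $(b4)$ is deliberately not assumed; the trick of collapsing $x \preceq x' \preceq x$ via $(b3)$ to $x \in R(x',x') = \{x'\}$ using $(t3)$ is what drives the whole argument. Once $\preceq$ is confirmed to be a genuine partial order on the finite set $V$, the rest is a routine maximality play.
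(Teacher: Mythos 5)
Your proof is correct, and it reaches the conclusion by a genuinely different (though spiritually related) mechanism from the paper's. The paper runs an explicit descent: starting from $u$, it repeatedly picks $x_{k+1}\in R(x_k,v)\cap R(x_k,w)\setminus\{x_k\}$ and uses $(b1)$ (a consequence of $(b3)$) together with $(b2)$ to show that the sets $R(x_k,v)\cap R(x_k,w)$ form a strictly decreasing chain of subsets of $R(u,v)\cap R(u,w)$, so finiteness forces termination at some $x_n$ with $R(x_n,v)\cap R(x_n,w)=\{x_n\}$; the terminal $x_n$ lies in $R(u,v)\cap R(u,w)$ by the chain of inclusions. You instead order $S=R(u,v)\cap R(u,w)$ by $x\preceq x'$ iff $x\in R(u,x')$, verify that this is a genuine partial order (with antisymmetry extracted from $(b3)$ and $(t3)$ via $x\in R(x',x')=\{x'\}$ -- the one step the paper never needs, since it measures progress by cardinality rather than by an order), and take a maximal element of $S$. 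Your membership and comparison claims for a putative $y\in R(x,v)\cap R(x,w)\setminus\{x\}$ are correct instantiations of $(b2)$ and $(b3)$ after symmetrizing with $(t2)$. What your version buys is a cleaner termination argument: the paper's ``continuing like this'' induction is replaced by the standard fact that a finite poset has maximal elements, and the monotonicity bookkeeping on cardinalities disappears. What the paper's version buys is that it never needs antisymmetry of any relation, only the strict shrinking of the intersections, which is a slightly lighter use of the axioms. Both are finite extremal arguments and both are valid.
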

\begin{proof}

We have $ u\in R( u,v) \cap R( u,w) $.  If $  R( u,v) \cap R( u,w) = \{u\} $, then $ u $ is the required $ x $ and completes the proof. If not, there exist a $ x_{1} $ such that $ x_{1} \in R( u,v) \cap R( u,w) $. Then $ x_{1} \in R( u,v) $ and $ x_{1}  \in R( u,w) $  and by $ (b1) $ we have $ u\notin R( x_{1},v) $ and $ u\notin R( x_{1},w) $ imply that $ R( x_{1},v) \subset R( u,v)  $ and $ R( x_{1},w) \subset R( u,w)  $ by $ (b2) $. Hence 
 $   R( x_{1},v) \cap  R( x_{1},w)   \subset   R( u,v) \cap R( u,w) $ and 
 $  \mid  R( x_{1},v) \cap  R( x_{1},w) \mid <  \mid   R( u,v) \cap R( u,w) \mid  $. If $  R( x_{1},v) \cap  R( x_{1},w) = \{x_1\} $, complete the proof with $ x = x_{1} $. Otherwise, there exist $ x_{2} $ such that $ x_{2} \in R( x_{1},v) \cap R( x_{1},w) $ and by the above argument we have $ R( x_{2},v) \cap  R( x_{2},w)   \subset   R( x_{1},v) \cap R( x_{1},w)  \subset   R( u,v) \cap R( u,w)  $ and 
 $ \mid   R( x_{2},v) \cap  R( x_{2},w) \mid  <  \mid   R( x_{1},v)   \cap R( x_{1},w) \mid   < \mid   R( u,v) \cap R( u,w) \mid $.  If $  R( x_{2},v) \cap  R( x_{2},w) = \{x_2\}$, complete the proof with $ x = x_{2} $.
 Otherwise, since $ V $ is finite, we can find $ x_{n} $ with  $ x_{n} \in R( x_{n-1},v) \cap R( x_{n-1},w) $ such that   $  R( x_{n},v) \cap  R( x_{n},w)   \subset   R( x_{n-1},v) \cap R( x_{n-1},w)    \subset... \subset   R( u,v) \cap R( u,w)    $ and 
 $ \mid   R( x_{n},v) \cap  R( x_{n},w)\mid 
   < \mid   R( x_{n-1},v)   \cap R( x_{n-1},w) \mid 
    <... < \mid   R( u,v) \cap R( u,w) \mid $ and
 $   R( x_{n},v) \cap  R( x_{n},w) = \{x_n\}  $. 
    \end{proof}	  
    \begin{proposition}\label{Ws1}
	A transit function $ R $ defined on a non-empty set $ V $ satisfies axioms $ (b2) ,(b3)  $	and $ (ta) $ then $ R $ satisfies axioms $ (s1) $ and $(s2)$.
\end{proposition}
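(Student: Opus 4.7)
The plan is to argue both $(s1)$ and $(s2)$ by contradiction, in each case reducing to an application of $(ta)$ on a triangle whose ``long side'' is already an edge of $G_R$. For $(s1)$, I would assume its hypotheses together with $v \notin R(\bar u,\bar v)$ and first dispose of degenerate coincidences among $u,\bar u,v,\bar v$ by $(t1)$--$(t3)$. The target triangle is $(v,\bar u,\bar v)$ carrying the edge $R(v,\bar v)=\{v,\bar v\}$; if $(ta)$ applies, it forces $R(\bar u,\bar v)=\{\bar u,\bar v\}$, contradicting $u\in R(\bar u,\bar v)$.

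Two of the three intersection conditions for this $(ta)$ application are immediate. The condition $R(v,\bar v) \cap R(\bar v,\bar u) = \{\bar v\}$ follows from $v \notin R(\bar u,\bar v)$, while $R(v,\bar v) \cap R(v,\bar u) = \{v\}$ (equivalently $\bar v \notin R(v,\bar u)$) is secured by a two-step $(b3)$ collapse: if $\bar v \in R(v,\bar u)$, apply $(b3)$ to $\bar u \in R(v,u)$ with witness $\bar v$ to deduce $\bar u \in R(\bar v,u)$, and then $(b3)$ to $u \in R(\bar v,\bar u)$ with witness $\bar u$ to force $u \in R(\bar u,\bar u) = \{\bar u\}$ via $(t3)$, contradicting $R(u,\bar u)=\{u,\bar u\}$ in the non-degenerate case.

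The third condition $R(v,\bar u) \cap R(\bar u,\bar v) = \{\bar u\}$ is the delicate one, and Lemma~\ref{s1s2} supplies it. Applied to the triple $(\bar u,v,\bar v)$, it produces $r \in R(\bar u,v) \cap R(\bar u,\bar v)$ with $R(r,v) \cap R(r,\bar v) = \{r\}$. The previous step excludes $r = \bar v$, and $r = v$ directly contradicts $v \notin R(\bar u,\bar v)$. If $r = \bar u$, we are done. Otherwise $r \notin \{\bar u, v, \bar v\}$, and the triangle $(r,v,\bar v)$ with edge $R(v,\bar v)=\{v,\bar v\}$ already satisfies all three intersection conditions of $(ta)$, so $(ta)$ forces new edges $R(r,v) = \{r,v\}$ and $R(r,\bar v) = \{r,\bar v\}$. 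Combining these adjacencies with $r \in R(u,v)$ and $R(\bar u,r) \subseteq R(\bar u,v) \cap R(\bar u,\bar v)$ (both supplied by $(b2)$), a further application of Lemma~\ref{s1s2} and $(ta)$ on the triangle $(u,\bar u,r)$ with edge $R(u,\bar u)=\{u,\bar u\}$ yields that $r$ is also adjacent to $u$ or $\bar u$ in $G_R$; replaying the $(b3)$-with-$(t3)$ collapse at $r$ in place of $\bar v$ then produces the final contradiction. The proof of $(s2)$ follows the same scheme, with the roles of the edges $R(u,\bar u)$ and $R(v,\bar v)$ swapped and with the negation $\bar u \notin R(u,\bar v)$ driving the contradiction.

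The main obstacle is precisely the case $r \notin \{\bar u,v,\bar v\}$: a second application of $(ta)$ on the triangle $(u,\bar u,r)$ and a re-use of the $(b3)$-plus-$(t3)$ collapse at $r$ are needed, and managing the several $(b2)$-based inclusions that feed these further applications is the point at which all three of the axioms $(b2),(b3),(ta)$ visibly cooperate.
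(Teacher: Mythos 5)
Your opening strategy coincides with the paper's: argue by contradiction, aim $(ta)$ at a triangle carrying the known edge $R(v,\bar v)=\{v,\bar v\}$, establish two of the three singleton-intersection hypotheses directly (your Claim about $\bar v\notin R(v,\bar u)$ via the $(b3)$/$(t3)$ collapse is essentially the paper's Claim 1), and use Lemma~\ref{s1s2} to attack the third. Up to the point where you produce $r\in R(\bar u,v)\cap R(\bar u,\bar v)$ with $R(r,v)=\{r,v\}$ and $R(r,\bar v)=\{r,\bar v\}$, you are on the paper's track (this $r$ is the paper's $y_1$).

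The gap is in the next step. You assert that ``a further application of Lemma~\ref{s1s2} and $(ta)$ on the triangle $(u,\bar u,r)$ \ldots yields that $r$ is also adjacent to $u$ or $\bar u$.'' It does not: Lemma~\ref{s1s2} applied to the triple $(r,u,\bar u)$ (or to any relevant triple) produces some \emph{new} vertex $x\in R(r,u)\cap R(r,\bar u)$ with $R(x,u)\cap R(x,\bar u)=\{x\}$, and $(ta)$ then makes \emph{that} vertex adjacent to $u$ and $\bar u$ --- it says nothing about $r$ itself, and no contradiction materializes after this second round. What is actually required, and what the paper does, is an unbounded iteration: from $y_1$ one builds $y_2\in R(u,\bar v)\cap R(u,y_1)$ with $R(y_2,\bar v)=\{y_2,\bar v\}$, $R(y_2,y_1)=\{y_2,y_1\}$, then $y_3\in R(\bar u,y_1)\cap R(\bar u,y_2)$, and so on, alternating sides while tracking the betweenness facts $u\in R(\bar u,y_{2m})$ and $\bar u\in R(u,y_{2m-1})$; only the finiteness of $V$ forces the chain to terminate at $y_n\in\{u,\bar u\}$, at which point the accumulated relations contradict $R(y_n,y_{n-1})=\{y_n,y_{n-1}\}$. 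Your proposal compresses this into two rounds and misattributes the output of the second round to $r$, so the ``final contradiction'' is not actually reached. The same objection applies to your one-line treatment of $(s2)$, which in the paper is not a symmetric replay but a separate multi-case argument that moreover \emph{uses} the already-proved $(s1)$ (in its Claims 3 and 4); that dependence is absent from your sketch.
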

\begin{proof}   
	Let $ R $ be a transit function satisfying $ (b2)$, $ (b3) $ and $ (ta)$. 
We have to show that $ R $ satisfies $ (s1) $. Suppose not. That is $ R( u,\bar{u}) = \{ u,\bar{u}  \}, R( v,\bar{v}) = \{ v,\bar{v} \},  u\in R( \bar{u}, \bar{v} ) $ and $ \bar{u} , \bar{v} \in R( u,v) $, and $ v\notin R( \bar{u}, \bar{v} )$. Consider the vertices $  \bar{u}, v,  \bar{v} $.\\
\textbf{claim 1:} $ \bar{v} \notin R( \bar{u}, v ) $ and  $ u\notin R( \bar{u}, v )$.
	
	If  $ \bar{v} \in R( \bar{u}, v ) $, we have $ \bar{u} \in R( u, v ) $ and $ \bar{v} \in R( \bar{u}, v )  \Rightarrow \bar{u} \in R( u, \bar{v} ) $ by $ ( b{3}) $ and then $ u\in R( \bar{u}, \bar{v} ), \bar{u} \in R( u, \bar{v} ) \Rightarrow u \in R( \bar{u}, \bar{u} )
	$, a contradiction. Therefore $ \bar{v} \notin R( \bar{u}, v ) $. Also since $ \bar{u}\in R( u, v )$, by $(b1)$  we have $ u\notin R( \bar{u}, v )  $. Hence the claim.\\
	\textbf{Claim 2:} $   R( \bar{u}, \bar{v} ) \cap  R( \bar{u}, v )  \setminus \{ \bar{u} \} \neq \phi $.
	
	If $R( \bar{u}, \bar{v} ) \cap  R( \bar{u}, v )  \setminus \{ \bar{u} \} = \phi  $,  then $  R( \bar{u}, \bar{v} ) \cap  R( \bar{u}, v ) = \{\bar{u} \} ,  R( \bar{u}, \bar{v} ) \cap  R( \bar{v}, v ) = \{ \bar{v} \} $ and $  R( \bar{u}, v ) \cap  R( \bar{v}, v ) = \{ v \} $ and $ R( v,\bar{v}) = \{ v,\bar{v}  \} $ then by $ (ta) $ we can say that $ R( \bar{u}, \bar{v} ) =\{  \bar{u}, \bar{v} \}  $, a contradiction since $ u \in R( \bar{u}, \bar{v} )$. Therefore there exist a $ w_{1} $ with $ w_{1} \neq  \bar{u} $ such that $  w_{1} \in   R( \bar{u}, \bar{v} ) \cap  R( \bar{u}, v ) $. Hence the claim.
	Since  $  w_{1} \in   R( \bar{u}, \bar{v} ) \cap  R( \bar{u}, v ) $ by  lemma~\ref{s1s2}, we can find  $   y_{1} $ such that $  R(  y_{1} , \bar{v} ) \cap  R( y_{1}, v ) = \{  y_{1} \} $, and by $ (ta) $ on $  y_{1}, v, \bar{v} $, we have $  R(  y_{1} , \bar{v} ) =  \{ y_{1} , \bar{v} \}  $ and $  R(  y_{1} , v ) =  \{ y_{1} , v \}  $.
	Since   $ \bar{u} \in R( u, v ),  y_{1} \in   R( \bar{u}, v ) $ by $ (b{3}) $ we have  $ \bar{u} \in   R( u, y_{1} ) $. 
	Now consider the vertices $ u, \bar{v},y_{1}  $, then analogous to claim1, here we can prove  that $ \bar{v} \notin R( u, y_{1} ) $ and $ \bar{u} , y_{1} \notin R( u, \bar{v} )  $ by  $ (b{3}) $.
	Also we can prove $R(u, \bar{v} ) \cap  R( u, y_{1} )    \setminus \{ u \} \neq \phi $
	by applying $ (ta) $ on $ u, \bar{v},y_{1}  $, therefore there exist a $ y_{2} $ with $ y_{2} \neq  u $ such that $  y_{2} \in   R(u, \bar{v} ) \cap  R( u, y_{1} )   $
	and   $  R(  y_{2} , \bar{v} ) \cap  R( y_{2}, y_{1} ) = \{  y_{2} \} $. Then by $ (ta) $ on $  y_{2}, y_{1}, \bar{v} $, we have $  R(  y_{2} , \bar{v} ) =  \{ y_{2} , \bar{v} \}  $ and $  R(  y_{2} , y_{1} ) =  \{ y_{2},  y_{1} \}  $.
	Since  $ u \in R( \bar{u}, \bar{v} ),  y_{2} \in   R( u, \bar{v} ) $, we have  $ u \in   R( \bar{u}, y_{2} ) $ by $ (b{3}) $. 
	Now consider $ \bar{u} , y_{1}, y_{2} $, and apply the same as above, we can find a $ y_{3} $  such that $  y_{3} \in   R(  \bar{u}, y_{1} ) \cap  R( \bar{u}, y_{2} )   $ and  $  R(  y_{3} , y_{1} ) =  \{ y_{3},  y_{1} \}  $ and  $  R(  y_{3} , y_{2} ) =  \{ y_{3},  y_{2} \}  $.
	Also  $ \bar{u} \in   R( u, y_{3} ) $. 
	Continuing like this we can find $ y_{4},  y_{5}, ... $ such that  $  R(  y_{k} , y_{k-1} ) =  \{ y_{k},  y_{k-1} \}  $ and  $  R(  y_{k} , y_{k-2} ) =  \{ y_{k},  y_{k-2} \}  $  and for some $ k=2m $,  $ u \in   R( \bar{u}, y_{k} ) $ and
	$ \bar{u} \in   R( u, y_{k-1} ) $.  
	Since $ V $ is finite, we can find some $ n $ such that either $ y_{n}=u $ or  $ y_{n}=\bar{u} $.
	If $ y_{n}=u $, we have $  R(  y_{n} , y_{n-1} ) =  \{ y_{n},  y_{n-1} \}  $ and  $  R(  y_{n} , y_{n-2} ) =  \{ y_{n},  y_{n-2} \}  $  and 
	$ u \in   R( \bar{u}, y_{n} ) $ and also $ \bar{u} \in   R( u, y_{n-1} ) $, a contradiction since
	 $ R( u, y_{n-1} )= R(  y_{n} , y_{n-1} ) =  \{ y_{n},  y_{n-1} \}$.
	If $ y_{n}= \bar{u} $, then  $  \bar{u} \in   R( u, y_{n} ) $ and also  $ u \in   R( \bar{u}, y_{n-1} ) $, a contradiction since  $R( \bar{u}, y_{n-1} )=  R(  y_{n} , y_{n-1} ) =  \{ y_{n},  y_{n-1} \}  $. In both case we have a contradiction.	Therefore  $ v\in R( \bar{u}, \bar{v} )$.
	
	Now we have to show that $R$ satisfies axiom $(s2)$. Suppose not. That is $ R( u,\bar{u}) = \{ u,\bar{u}  \}, R( v,\bar{v}) = \{ v,\bar{v}  \},   \bar{u}  \in R( u,v) $, $ v\notin R( \bar{u}, \bar{v} ) ,  \bar{v}  \notin R( u,v)  $ and $ \bar{u}  \notin R( u,\bar{v} ) $.\\
	\textbf{Claim 3:} $ u\notin R( \bar{u}, \bar{v} ) $.
	Suppose $ u\in R( \bar{u}, \bar{v} ) $. 
	Consider the vertices $ \bar{u}, \bar{v} ,v $.  Since $\bar{u} \in R(u,v)$ by $(b1)$ we have $ u\notin R( \bar{u}, v ) $. Also $ \bar{v} \notin R( \bar{u}, v ) $ since $\bar{v}  \notin R( u,v)  $ and $   v\notin R( \bar{u}, \bar{v} ) $. From these we have $ R( \bar{u}, v ) \cap R( \bar{v}, v ) = \{  v \}$,
	$ R( \bar{u}, \bar{v} ) \cap R( \bar{v}, v ) = \{  \bar{v} \}   $ and $  R( \bar{v}, v ) = \{  \bar{v} ,v \} $ also  $ u\in R( \bar{u}, \bar{v} ) $. Then by $(ta)$, $ R( \bar{u}, v ) \cap R( \bar{u}, \bar{v} )\setminus \{ \bar{u} \} \neq \phi$.  That is there exist $ w_{1} $ such that $ w_{1} \in R( \bar{u}, v ) \cap R( \bar{u}, \bar{v} ) $.
	Hence by  lemma~\ref{s1s2}, there exist a $ y_{1} $ such that $ R( y_{1}, v ) \cap R( y_{1}, \bar{v} ) = \{y_{1}\}$.  
	Then by $(ta)$ on $  y_{1}, \bar{v}, v $, we have $  R( y_{1}, \bar{v} ) =  \{ {y_{1}}, \bar{v} \}$ and $  R( y_{1}, v  ) =\{  {y_{1}, v} \}$.
	Also $ \bar{u} \in R( u, y_{1} ) $. That is we have $ R( u,\bar{u}) = \{ u,\bar{u}  \}, R( \bar{v}, y_{1} ) = \{ \bar{v}, y_{1}  \},  u,y_{1}\in R( \bar{u}, \bar{v} ),  $ and $  \bar{u} \in R( u, y_{1} ) $, then by $(s{1}) $  $  \bar{v} \in R( u, y_{1} ) \subseteq R( u,v ) $. That is $ \bar{v} \in  R( u,v ) $, a contradiction. Therefore $ u\notin R( \bar{u}, \bar{v} ) $ and hence claim 3.
	
	Now consider $ u,\bar{u}, \bar{v} $.
	We have  $ u\notin R( \bar{u}, \bar{v} ),  \bar{u}  \notin R( u,\bar{v} ) $  and $ v\notin R( \bar{u}, \bar{v} ) $. Also $ v\notin R( u, \bar{v} )$, since $ \bar{u}\notin R( u, \bar{v} )$. Apply $(ta)$ on $ u,\bar{u}, \bar{v} $, we have two cases\\
	\textbf{case 1} 
	$ R( u,\bar{v} ) \cap R( \bar{u}, \bar{v} ) = \{  \bar{v} \}$. Which implies that $ R( \bar{u}, \bar{v} ) = \{  \bar{u}, \bar{v} \}  $ and $ R( u, \bar{v} ) = \{  u, \bar{v} \}  $ by $(ta)$.
	Then consider $ v,\bar{u}, \bar{v} $, also it is clear that $ R( \bar{u}, \bar{v} ) = \{  \bar{u}, \bar{v} \}  $ and $ R( v, \bar{v} ) = \{ v, \bar{v} \} $, then by $(ta)$ on $ v,\bar{u}, \bar{v} $, we have 
	$ R( \bar{u}, v ) = \{  \bar{u}, v \}  $.
	Consider $u,v, \bar{v} $. We have $ \bar{u}, v \notin  R( u, \bar{v} )  $ and $    \bar{v} \notin R( u,v ) $. Then $ R( u, \bar{v} ) \cap  R( v, \bar{v} ) = \{ \bar{v} \} , R( u, v ) \cap  R( v, \bar{v} ) = \{ v \} $ and $ R( u, v ) \cap  R( u, \bar{v} ) = \{ u \} $ and $  R( v,\bar{v}) = \{ v,\bar{v}  \} $, by $(ta)$ we have $  R( u,v) = \{ u,v  \} $, contradiction since $ \bar{u} \in R( u,v)$.\\
\textbf{case 2}  
	$ R( u,\bar{v} ) \cap R( \bar{u}, \bar{v} ) \neq \{  \bar{v} \} $. That is there exist a $ x_{1} $ such that $ x_{1} \in  R( u,\bar{v} ) \cap R( \bar{u}, \bar{v} ) $, and by  lemma~\ref{s1s2},  we may assume that $ R( x_{1}, u ) \cap R( x_{1}, \bar{u} ) = \{  x_{1} \} $ and by $(ta)$ on $ u,x_{1},  \bar{u}  $, we have $ R( u, x_{1} ) = \{   u, x_{1} \}   $ and $ R( \bar{u}, x_{1} ) = \{   \bar{u}, x_{1} \} $.
	Then $ x_{1} \in  R( u,\bar{v} ),  $ but  $ x_{1} \notin  R( \bar{u},v )  $. Now	consider $ \bar{u}, x_{1},v $.\\
    \textbf{Claim 4:} $\bar{u} \notin R(  x_1, v ) $. Suppose $\bar{u} \in R(  x_1, v ) $. Consider the vertices $\bar{u}, \bar{v}, v$, we have $R(\bar{u},\bar{v}) \cap R(v,\bar{v})=\{\bar{v}\}$, $R(\bar{u},v) \cap R(v,\bar{v})=\{v\}$, $R(v,\bar{v})= \{v,\bar{v}\}$. If $R(\bar{u},\bar{v}) \cap R(\bar{u},v)=\{\bar{u}\}$, then by $(ta)$,we get $R(\bar{u},\bar{v})= \{\bar{u},\bar{v}\}$, a contradiction as   $ x_{1} \in  R(\bar{u},\bar{v} )$. So there exist a $x_2$ such that $ x_2 \in R(\bar{u},\bar{v}) \cap R(\bar{u},v)$.  $\bar{u}\in R(x_1,v)$ and $ x_2 \in R(\bar{u},\bar{v})$ implies $\bar{u} \in R(x_1,x_2)$ by $(b3)$. Now applay $(s1)$ on the vertices $\bar{u}, x_1, \bar{v}, x_2$, we get $\bar{v} \in R(x_1,x_2)$. Also $\bar{u}\in R(x_1,v)$, $x_2 \in R(\bar{u}, v)$ implies $x_2 \in R(x_1,v)$ by $(b2)$ and $\bar{v} \in R(x_1,x_2)$, $x_2 \in R(x_1,v)$ implies $\bar{v} \in R(x_1,v)$ by $(b2)$. By $(s1)$ on $\bar{u}, x_1, \bar{v}, v$, we get $v \in R(\bar{u}, \Bar{v})$, a contradiction. Therefore $\bar{u} \notin R(  x_1, v ) $, hence the claim.
Now we have $ x_1, \bar{v} \notin  R( \bar{u}, v )  $ and $ \bar{u} \notin R(  x_1, v ) $.
	Then $ R( \bar{u}, x_{1} ) \cap  R( \bar{u}, v) = \{   \bar{u} \} $,  $ R( \bar{u}, x_{1} ) \cap  R(x_{1}, v) = \{x_{1} \} $, $ R( \bar{u}, x_{1}) = \{ \bar{u} , x_{1} \} $, then we have to consider two cases\\
	\textbf{case 2.1}  
    	If $ R( \bar{u}, v) \cap   R(x_{1}, v) =  \{   v \}  $, then by $(ta)$ on $ \bar{u}, x_{1},v $, we get $ R( \bar{u}, v) = \{   \bar{u}, v \} $ and $ R(x_{1}, v) = \{   x_{1},v \}$. Now consider $\bar{u}, v, \bar{v}$, we have $ R(\bar{u}, v) = \{ \bar{u},v \}$, $R(v,\bar{v})= \{v,\bar{v}\}$ and $x_1 \notin R(\bar{u},v)$. Apply $(ta)$ on $\bar{u}, \bar{v}, v$, implies that $R(\bar{u}, \bar{v})=\{\bar{u}, \bar{v}\}$, contradiction to $x_1 \in R(\bar{u}, \bar{v})$.\\ 
	\textbf{case 2.2}  
	$ R( \bar{u}, v) \cap   R(x_{1}, v) \neq  \{  v \}  $. Then there exist $ x_{2} $ such that $ x_{2} \in  R( \bar{u}, v) \cap   R(x_{1}, v) $, and also by the lemma~\ref{s1s2} we may assume that $ R( \bar{u}, x_{2}) \cap   R(x_{1}, x_{2}) = \{  x_{2} \} $ and by $(ta)$ on $ x_{2},x_{1},  \bar{u}  $ we have $ R( \bar{u}, x_{2})  = \{  \bar{u}, x_{2}   \}  $ and  $ R( x_{1}, x_{2})  = \{  x_{1}, x_{2}   \}  $. Also $ x_{2} \in R( \bar{u}, v) \subseteq R( u, v)$.
	That is $ x_{2} \in R( u, v)  $, but  $ x_{2} \notin R( u, \bar{v})$.
	Now consider $  x_{1},x_{2}, \bar{v} $, by $(ta)$ we get a $ x_{3} $ such that $ x_{3} \notin R( \bar{u}, v)  $, but  $ x_{3} \in R( u, \bar{v})  $. Continuing like this we get  $  x_{4},x_{5},...,  x_{n} $ such that $x_n= v$ or $x_n= \bar{v}$. If $x_n= v$, then	$ x_n \in R( u, v)  $, but  $ x_n \notin R( u, \bar{v})  $ and if $x_n= \bar{v}$, then
	$x_n \notin R( u, v)  $, but  $ x_n \in R( u, \bar{v})  $. Therefore in all the cases we have $ R( u, \bar{v} ) \cap  R( v, \bar{v} ) = \{ \bar{v} \} , R( u, v ) \cap  R( v, \bar{v} ) = \{ v \} $ and 
	$ R( u, v ) \cap  R( u, \bar{v} ) = \{ u \} $ and 
	$  R( v,\bar{v}) = \{ v,\bar{v}  \} $,  and by $(ta)$ on  $ v,u, \bar{v}$,  we get  $  R( u,v) = \{ u,v  \} $, which is the final contradiction, since $ \bar{u}\in  R( u,v) $.
	Therefore $ \bar{u}  \in R( u,\bar{v} ) $. 
\end{proof}
The following straightforward Lemma for the connectedness of the underlying graph $G_R$ of a transit function $R$ is proved in \cite{mcjmhm-10}.

\begin{lemma}\label{connected}\cite{mcjmhm-10}
	If the transit function $R$ on a non-empty set $V$ satisfies axioms $(b1)$ and $(b2)$, then the underlying graph $G_R$ of $R$ is connected.
\end{lemma}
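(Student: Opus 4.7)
The plan is to prove, for an arbitrary pair $u,v\in V$, the existence of a walk in $G_R$ joining $u$ to $v$, using strong induction on the cardinality $|R(u,v)|$. By $(t1)$ and $(t2)$ both $u$ and $v$ lie in $R(u,v)$, so $|R(u,v)|\ge 2$ whenever $u\neq v$. The base case is $|R(u,v)|=2$: then $R(u,v)=\{u,v\}$, which is precisely the condition under which $uv$ is declared an edge of $G_R$, so the single-edge walk does the job.

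For the inductive step I would assume $|R(u,v)|\ge 3$ and pick any $x\in R(u,v)\setminus\{u,v\}$. Axiom $(b2)$, combined with $(t2)$ where convenient, immediately yields the weak containments $R(u,x)\subseteq R(u,v)$ and $R(x,v)\subseteq R(u,v)$. To invoke the inductive hypothesis I must upgrade these to strict containments, i.e.\ show $v\notin R(u,x)$ and $u\notin R(x,v)$. This is exactly the content of $(b1)$ in the symmetric form the authors employ elsewhere (for instance in the proof of Proposition~\ref{Ws1}, where they write ``since $\bar{u}\in R(u,v)$, by $(b1)$ we have $u\notin R(\bar{u},v)$''): from $x\in R(u,v)$ with $x\neq v$ we obtain $v\notin R(u,x)$, and analogously $u\notin R(x,v)$. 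Therefore $|R(u,x)|<|R(u,v)|$ and $|R(x,v)|<|R(u,v)|$, the induction hypothesis supplies walks from $u$ to $x$ and from $x$ to $v$ in $G_R$, and concatenating them yields the required walk from $u$ to $v$.

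The only genuinely delicate point in the argument is ensuring the strict decrease of the induction parameter; this is precisely where $(b1)$ is indispensable, as $(b2)$ alone delivers only the weak containments. Once that strict decrease is in hand, the rest is a routine divide-and-conquer induction, and no axioms beyond $(b1)$ and $(b2)$ are invoked. I would expect the full write-up to fit in just a few lines, in keeping with the authors' description of the lemma as ``straightforward.''
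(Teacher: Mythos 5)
Your argument is correct: the strong induction on $|R(u,v)|$, with $(b2)$ giving $R(u,x),R(x,v)\subseteq R(u,v)$ and $(b1)$ (via $(t2)$) forcing the strict decrease, is exactly the standard proof of this fact, and the finiteness of $V$ assumed throughout the paper makes the induction legitimate. The paper itself offers no proof --- it imports the lemma from \cite{mcjmhm-10} --- and your write-up matches the argument given there, so there is nothing to correct.
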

We have the following Theorem.
	
\begin{theorem}\label{b2}
 If $R$ is a transit function on $V$ satisfying the axioms  $(J0')$ and $(b3)$, then $R$ satisfies axiom $(b2)$ and $G_R$ is connected.   
\end{theorem}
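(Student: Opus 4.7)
The plan is to establish $(b2)$ by contradiction using $(J0')$ and $(b3)$, and then derive the connectedness of $G_R$ as a direct consequence of $(b2)$. Suppose for contradiction there is a quadruple $(u,v,x,y)$ violating $(b2)$, so $x \in R(u,v)$, $y \in R(u,x)$, but $y \notin R(u,v)$. If any two of $u,v,x,y$ coincide I reach a quick contradiction --- for instance, if $v \in R(u,x)$ then $(b3)$ applied to $x \in R(u,v)$ and $v \in R(u,x)$ forces $x \in R(v,v) = \{v\}$, so $x = v$ --- hence I may assume $u,v,x,y$ are pairwise distinct. Applying $(b3)$ to $x \in R(u,v)$ and $y \in R(u,x)$ yields $x \in R(y,v)$, which matches the first two hypotheses of $(J0')$ under the role assignment $u \mapsto u,\, x \mapsto y,\, y \mapsto x,\, v \mapsto v$. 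Consequently, if $R(u,x) \cap R(y,v) \subseteq \{u,x,y,v\}$, then $(J0')$ at once gives $y \in R(u,v)$, the required contradiction.

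The remaining case is that there exists an \emph{interfering} vertex $z \in R(u,x) \cap R(y,v) \setminus \{u,x,y,v\}$. I would choose the counterexample so as to minimize $|R(u,x) \cap R(y,v)|$, with $|R(u,x)|$ as a lexicographic tie-breaker, and aim to contradict this minimality. Applying $(b3)$ to $x \in R(u,v)$ and $z \in R(u,x)$ gives $x \in R(z,v)$; a nested use of $(J0')$ at the tuple $(u,z,x,v)$ --- iterated along any chain of sub-interferences, which must terminate by finiteness of $V$ --- yields $z \in R(u,v)$. The tuple $(u,v,z,y)$ is then intended to serve as a counterexample with strictly smaller measure, since $x \in R(u,x) \cap R(y,v)$ but $x \notin R(u,z)$ ($(b3)$ forbids the latter, else $z = x$). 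The main obstacle is precisely the verification that $y \in R(u,z)$, which is needed for $(u,v,z,y)$ to truly be a $(b2)$-counterexample: this has to be extracted from the simultaneous memberships $y,z \in R(u,x)$ using only $(b3)$ together with one more auxiliary round of $(J0')$, and is where the finite descent argument does its real work.

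Once $(b2)$ is in hand, the connectedness of $G_R$ follows by a straightforward induction on $|R(u,v)|$. If $|R(u,v)| = 2$ then $R(u,v) = \{u,v\}$ and $uv$ is an edge of $G_R$. Otherwise pick $x \in R(u,v) \setminus \{u,v\}$; $(b2)$ gives $R(u,x) \subseteq R(u,v)$ and $R(x,v) \subseteq R(u,v)$, and both inclusions are strict because $(b3)$ forbids $v \in R(u,x)$ (and symmetrically $u \in R(x,v)$) whenever $x \neq v$. The inductive hypothesis then furnishes a $u$-$x$ path and an $x$-$v$ path in $G_R$, whose concatenation is the required $u$-$v$ path.
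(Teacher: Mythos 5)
Your opening reduction matches the paper's: apply $(b3)$ to $x\in R(u,v)$, $y\in R(u,x)$ to get $x\in R(y,v)$, and if $R(u,x)\cap R(y,v)\subseteq\{u,x,y,v\}$ conclude $y\in R(u,v)$ by $(J0')$. The connectedness argument at the end is also fine (the paper instead cites a known lemma that $(b1)$ and $(b2)$ imply $G_R$ connected, but your induction on $|R(u,v)|$ is essentially its proof). The problem is the hard case, where an interfering vertex $z\in R(u,x)\cap R(y,v)\setminus\{u,x,y,v\}$ exists: there your argument has genuine gaps. First, you explicitly leave unproven the step $y\in R(u,z)$, calling it ``the main obstacle'' to be ``extracted.'' This step is in fact where the proof lives, and it needs no extra round of $(J0')$: from $x\in R(u,v)$ and $z\in R(u,x)$, $(b3)$ gives $x\in R(z,v)$; from $z\in R(v,y)$ and $x\in R(v,z)$, $(b3)$ gives $z\in R(x,y)$; from $y\in R(x,u)$ and $z\in R(x,y)$, $(b3)$ gives $y\in R(u,z)$. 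Second, your descent measure does not obviously decrease: to get $|R(u,z)\cap R(y,v)|<|R(u,x)\cap R(y,v)|$ you need $R(u,z)\cap R(y,v)\subseteq R(u,x)\cap R(y,v)$, i.e.\ a containment of the form $R(u,z)\subseteq R(u,x)$ for $z\in R(u,x)$ --- but that is exactly an instance of $(b2)$, the statement being proved, so the argument is circular as written. Third, the intermediate claim $z\in R(u,v)$ (needed for $(u,v,z,y)$ to be a $(b2)$-counterexample) is justified only by ``a nested use of $(J0')$ \ldots iterated along any chain of sub-interferences,'' which is itself the unproven hard case in disguise.

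The paper avoids all three issues by never trying to place $z$ in $R(u,v)$ and never comparing intervals. It builds a chain $w_1,w_2,\ldots$ with $w_{i+1}\in R(u,w_i)\cap R(y,v)$, derives $y\in R(u,w_i)$ and $w_i\in R(y,v)$ at each stage purely from $(b3)$ as above, and proves non-membership claims (e.g.\ $x,v,w_1,\ldots,w_{i-1}\notin R(u,w_i)$, all from $(b1)$ and $(b3)$) which force each new obstruction to $R(u,w_i)\cap R(y,v)\subseteq\{u,w_i,y,v\}$ to be a vertex not seen before; finiteness of $V$ then terminates the chain at some $w_n$ with $R(u,w_n)\cap R(y,v)\subseteq\{u,w_n,y,v\}$, and a single application of $(J0')$ to $(u,y,w_n,v)$ yields $y\in R(u,v)$. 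To repair your proposal you would need to replace the cardinality descent by this kind of chain construction (or prove the required interval containments by some other means), and actually carry out the $(b3)$ derivations you deferred.
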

	
\begin{proof}

Let $R$ satisfies axioms $(J0')$ and $(b3)$.  
To prove $R$ satisfies $(b2)$.   For  $u,x,y,v\in V$, let  $x\in R(u,v)$,  and $y\in R(u,x)$.  Since $R$ satisfies $(b3)$, we have   $x\in R(u,v), y\in R(u,x)\implies x\in R(y,v)$. In  axiom $(J0')$ the four elements are distinct, so the required minimum cardinality of the set $V $ is four and for $\mid V \mid \leq 3$, $(b2)$ holds trivially. We use induction on $\mid V \mid \geq 4$ and we explicitly prove cases where 	$\mid V \mid =4,5,6,7$. We also use the fact that axiom $(b3)$ implies $(b1)$.\\
\textbf{Case 1:} $\mid V \mid =4$.\\	
	Let $ V= \{u,x,y,v\} $. Here $ y\in R(u,x),$ $ x\in R(y,v)$. Since $x\in R(u,v)$, we have  $v\notin R(u,x)$ by $(b1)$.  Then $  R(u,x)\cap R(y,v) \subset \{u,x,y,v\} $  implies that $  y\in R(u,v) $  by  $(J0')$.\\
\textbf{Case 2:} $\mid V \mid =5$.\\	
Let $ V= \{u,x,w_1,y,v\} $. We have $ y\in R(u,x),$ $ x\in R(y,v)$ and if $  R(u,x)\cap R(y,v) \subset \{u,x,y,v\} $ implies that $  y\in R(u,v) $ by $(J0')$ and we are done. Assume  $w_1 \in  R(u,x)\cap R(y,v)  $ with $ w_1\notin \{u,x,y,v\} $. Now apply $(b3)$ we get the following.  $ x\in R(u,v), w_1\in R(u,x)   \implies  x\in R(w_1,v) $ , $ w_1\in R(y,v), x\in R(w_1,v)   \implies   w_1\in R(y,x) $  and $ y\in R(u,x), w_1\in R(y,x)  \implies  y\in R(u,w_1) $.\\
\textbf{Claim}  $x,v \notin R(u,w_1)$.
 Since  $ w_1\in R(u,x) $, $x \notin R(u,w_1) $ by $ (b1) $. Suppose $ v \in R(u,w_1) $, also we have $ x\in R(v,w_1) \implies v \in R(u,x) $  by  $(b3)$, a contradiction to $ (b1) $ as $ x \in R(u,v) $. That is  $x,v \notin R(u,w_1) $. Hence the claim.
 Now we have $ y\in R(u,w_1),  w_1\in R(y,v) $ and $  R(u,w_1) \cap R(y,v) \subset \{u,w_1,y,v\}  \implies y\in R(u,v) $  by  $(J0')$.\\
 \textbf{Case 3:} $\mid V \mid =6$.\\
Let $ V= \{u,x,w_1,w_2, y,v\} $. Here also $ y\in R(u,x),$ $ x\in R(y,v)$ and if $  R(u,x)\cap R(y,v) \subset \{u,x,y,v\} $ implies that $ y\in R(u,v) $. If $  R(u,x)\cap R(y,v) \not\subset \{u,x,y,v\} $. That is either $ w_1 $ or $ w_2 $ or both belongs to $ R(u,x)\cap R(y,v) $. Let us assume  $w_1 \in  R(u,x)\cap R(y,v)$ with $ w_1\notin \{u,x,y,v\} $. Then similar to  case $2$, we get either $y\in R(u,v) $, if $  R(u,w_1) \cap R(y,v) \subset \{u,w_1,y,v\} $ or $ w_2 \in R(u,w_1) \cap R(y,v) $ with $w_2 \notin \{u,x,w_1,y,v\} $. Then by $(b3)$, we get $w_1 \in R(w_2, x)$, since $w_1\in R(u,x), w_2\in R(u,w_1)$.\\
\textbf{Claim}  $u,y \notin R(w_1,v)$.
 Since  $ w_1\in R(y,v) $, $y \notin R(w_1,v) $ by $ (b1) $. Suppose $ u \in R(w_1,v) $ and we have $ w_1\in R(y,v) , u \in R(w_1,v) \implies  w_1 \in R(y,u) $ by $(b3)$, a contradiction to $ (b1) $ as $ y \in R(u,w_1) $. Hence the claim. Now $ x \in R(v,w_1), w_1 \in R(x,w_2) $ and $R(v,w_1) \cap R(x,w_2) \subset \{v,w_1,x,w_2\}$ implies $ x,w_1 \in R(v, w_2) $ by  $(J0')$.  Using $(b3)$ we have the following. $ w_2 \in R(y,v),x \in R( w_2,v)  \implies   w_2 \in R(y,x) $  and $y \in R(u,x) ,w_2 \in R(y,x)   \implies   y\in R(u,w_2) $.\\
 \textbf{Claim}  $x,v,w_1 \notin R(u,w_2) $.
 Since  $ w_2\in R(u,w_1) $ by $ (b1) $ we get $w_1 \notin R(u,w_2) $. Suppose $ x \in R(u,w_2) $ and we have $ w_1\in R(x,w_2)  \implies  x \in R(u,w_1) $ by $(b3)$, a contradiction to $ (b1) $ as $ w_1 \in R(u,x) $.  Suppose $ v \in R(u,w_2) $ and we have $ x\in R(v,w_2) $ then by  $(b3)$, $ v \in R(u,x) $, a contradiction to $ (b1) $ as $ x \in R(u,v) $. Hence the claim. Thus $R(u,w_2) \cap R(y,v) \subset \{u,w_2,y,v\} $. That is $ y\in R(u,w_2),  w_2\in R(y,v) $ and $  R(u,w_2) \cap R(y,v) \subset \{u,w_2,y,v\} $ implies that $y\in R(u,v) $  by  $(J0')$.\\
 \textbf{Case 4:} $\mid V \mid =7$.\\	
 Let $ V= \{u,x,w_1,w_2,w_3, y,v\} $. Then similar to  case $3$, we can prove the following.  Either $y\in R(u,v) $, or $ w_3 \in R(u,w_2) \cap R(y,v) $ with  $w_3\notin \{u,x,w_1,w_2,y,v\} $ and by  $(b3)$ we get $w_2 \in R(w_3, w_1)$, since $w_2\in R(u,w_1), w_3\in R(u,w_2)$. Also it is easy to get  $u,y,v \notin R(w_2,x) $. Then by $(J0)'$ on $x,w_1,w_2,w_3$, we get $ w_1,w_2 \in R(x, w_3) $. Using  $(b1)$ and $(b3)$ we can  easily  prove the Claims: $u,y, \notin R(w_2,v) $ and $x \notin R(w_3,w_1)$, since $ w_1,w_2 \in R(x, w_3) $. Then $ w_1 \in R(v,w_2), w_2 \in R(w_1,w_3) $ and $R(v,w_2) \cap R(w_1,w_3) \subset \{v,w_1,w_2,w_3\}$ implies $ w_1,w_2 \in R(v, w_3) $ by  $(J0')$. Then by continuous application of axiom $(b3)$, we get $y\in R(u,w_3)$ and we can prove the claim  $x,v,w_1,w_2 \notin R(u,w_3) $ easily as in  case $3$.  Then $ y\in R(u,w_3),  w_3\in R(y,v) $ and $  R(u,w_3) \cap R(y,v) \subset \{u,w_3,y,v\} $ implies that $y\in R(u,v) $  by axiom $(J0')$.\\
\textbf{Case $n$:} $\mid V \mid =n, n > 7$.\\	
 Let $ V= \{u,x,w_1,w_2,w_3,\ldots,w_n, y,v\} $. Like the above cases, we have either $y\in R(u,v)$ or $w_n \in R(u,w_{n-1}) \cap R(y,v)$ with  $w_n \notin \{u,x, w_1,w_2,\ldots,w_{n-1},y,v\}$. From the above cases we get the following: $w_i \in R(u,w_{i-1})$, $ w_{i-2},w_{i-1} \in R(w_{i-3}, w_i)$, $ w_{i-2},w_{i-1} \in R(v, w_i)$, $y\in R(u, w_i)$ and $w_{i-1} \in R(w_i, w_{i-2})$ for $4 \leq i \leq n-1$.  Using these we can easily prove the case $\mid V \mid = n$ like the above cases. Now $w_{n-1} \in R(u,w_{n-2})$, $ w_n \in R(u,w_{n-1}) \implies w_{n-1} \in R(w_n,w_{n-2})$ by  $(b3)$.
Also it is easy to get the claim $u,y,v,x, w_1,\ldots,w_{n-4} \notin R(w_{n-1},w_{n-3}) $. Then by $(J0)'$ on $w_{n-3},w_{n-2},w_{n-1},w_n$, we get $ w_{n-2},w_{n-1} \in R(w_{n-3}, w_n) $. Also we can   prove the Claim: $u,y \notin R(w_{n-1},v) $ and $x, w_1,\ldots,$ $ w_{n-3} \notin R(w_n,w_{n-2})$ using axiom $(b1)$ and $(b3)$ easily.  That is $ w_{n-2} \in  R(v,w_{n-1}),$ $ w_{n-1} \in R(w_n,w_{n-2}) $ and $R(v,w_{n-1}) \cap R(w_n,w_{n-2}) \subset \{v,w_{n-2},w_{n-1},w_n\}\implies  w_{n-2},w_{n-1} \in R(v, w_n) $ by  $(J0')$. Then by $(b3)$ we have the following.
 $w_n \in R(y,v), w_{n-1} \in R(w_n,v) \implies w_n \in R(y, w_{n-1})$ and  $y\in R(u,w_{n-1})$, $w_n \in R(y, w_{n-1}) \implies y \in R(u, w_n)$. We can prove the claim  $x,v,w_1,w_2,\ldots,w_{n-1} \notin R(u,w_n) $ easily as in the above cases. Then $ y\in R(u,w_n),  w_n\in R(y,v) $ and $  R(u,w_n) \cap R(y,v) \subset \{u,w_n,y,v\} $ implies that $y\in R(u,v) $  by axiom $(J0')$, which completes the proof that $R$ satisfies axiom $(b2)$. 
  Since for a transit function $R$, axiom $(b3)$ implies axiom $(b1)$ and by Lemma~\ref{connected}, it follows that $G_R$ is connected.
\end{proof}

\section{Interval function of diamond-weakly modular graphs}\label{interval_function}
In this section, we prove the main Theorem characterizing a diamond-weakly modular graph $G$ using the $(J0')$ axiom on the interval function $I_G$ of $G$, which in turn provide us an axiomatic characterization of the interval function $I_G$ of $G$.

\begin{theorem}\label{TDC}
 The interval function $I_G$ of a connected graph $G$ satisfies the axiom $(J0')$ if and only if $G$ is a diamond-weakly modular graph.
\end{theorem}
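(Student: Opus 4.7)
The plan is to prove both implications by contradiction. For the forward direction, assume $G$ is diamond-weakly modular and let $u,x,y,v$ be distinct vertices satisfying the hypothesis of $(J0')$. I would first reduce to the case $d(x,y)=1$: if $d(x,y)\geq 2$, choose $y'\in I(x,y)$ adjacent to $x$; by the transitivity property $(b2)$ enjoyed by every interval function, $y'$ lies in both $I(u,y)$ and $I(x,v)$, so the hypothesis forces $y'\in\{u,x,y,v\}$. The candidates $y'=x,y,v$ are excluded by immediate distance checks, while $y'=u$ yields the contradictory equations $d(u,y)=d(u,x)+d(x,y)$ (from $x\in I(u,y)$) and $d(u,y)=d(x,y)-d(x,u)$ (from $u\in I(x,y)$), combined with $d(x,u)=1$. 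With $d(x,y)=1$ fixed, assume $x\notin I(u,v)$ for contradiction, so a shortest $u$-$v$ path strictly shortens the broken geodesic $u\to x\to y\to v$. Applying $(TDC)$ to a carefully chosen triple consisting of an endpoint of this shortcut together with the edge $xy$ (and falling back on $(QC)$ in degenerate cases) produces a vertex $z$ adjacent to both $x$ and $y$, and a distance check places $z$ in $I(u,y)\cap I(x,v)\setminus\{u,x,y,v\}$, contradicting the hypothesis.

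For the reverse implication, assume $I_G$ satisfies $(J0')$ and verify $(QC)$ and $(TDC)$ in turn. For $(QC)$, given the hypothesis data $U,V,W,Y$ with $d(U,V)=d(U,W)=d(U,Y)-1=m$, $d(V,Y)=d(W,Y)=1$, and $d(V,W)=2$, observe that $V\in I(U,Y)$ and $Y\in I(V,W)$, while $V\notin I(U,W)$ since $d(U,V)+d(V,W)=m+2>m$. The contrapositive of $(J0')$ therefore forces some $z\in I(U,Y)\cap I(V,W)$ to lie outside $\{U,V,Y,W\}$; since $d(V,W)=2$ and $z\neq V,W$, this $z$ is a common neighbor of $V$ and $W$, and combining $z\in I(U,Y)$ with $z\neq Y$ pins down $d(U,z)=m-1$. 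For $(TDC)$, a parallel argument starts from a neighbor $x$ of $v$ in $I(u,v)$ satisfying $d(u,x)=m-1$: since $x\in I(u,v)$, $v\in I(x,w)$, and $x\notin I(u,w)$ in the non-degenerate case, axiom $(J0')$ applied to $(u,x,v,w)$ produces a vertex $z\in I(u,v)\cap I(x,w)$ outside $\{u,x,v,w\}$, which distance analysis shows to be a common neighbor of $v,w$ at distance $m-1$ from $u$ and automatically adjacent to $x$; the symmetric argument with a neighbor $y$ of $w$ furnishes the second diamond required by $(TDC)$.

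The principal obstacle is the construction in the forward direction: the shortcut produced when $x\notin I(u,v)$ has to be walked back to the edge $xy$ through an iterated application of $(TDC)$ or $(QC)$, and at each step one must verify that the resulting vertex is neither in $\{u,x,y,v\}$ nor coincides with a previously produced vertex. On the reverse side, the subtle technical point is ruling out the alternative $d(U,z)=m$ with $z\sim Y$ when extracting $z$ from the non-containment guaranteed by $(J0')$; this is handled either by a second application of $(J0')$ to a refined configuration or by a direct interval-minimality argument forcing $d(z,Y)=2$.
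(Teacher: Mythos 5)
Your overall strategy coincides with the paper's (both directions argued by contradiction/contraposition, with the crucial witness $z$ extracted from $I(u,y)\cap I(x,v)$), and your reduction to $d(x,y)=1$ is fine, but the proposal leaves the two hardest steps unproved, and in the forward direction the step as described cannot work as stated. The condition $(TDC)$ applies to an apex $a$ and an edge $pq$ only when $d(a,p)=d(a,q)$; for the edge $xy$ you have $d(u,x)=d(u,y)-1$ and $d(v,y)=d(v,x)-1$, so neither $u$ nor $v$ (nor an endpoint of the shortcut, for which no equidistance is guaranteed) can serve as the apex of a $(TDC)$ application to $xy$. What is actually needed---and what you yourself flag as ``the principal obstacle'' without resolving---is a mechanism that produces an equidistant configuration. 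The paper does this by a double extremality argument: among all quadruples violating $(J0')$ it takes one with $d(u,y)=m$ maximum, then picks a neighbour $y_1$ of $y$ in $I(y,v)$ with $d(u,y_1)$ minimum, and runs a case analysis on $d(u,y_1)\in\{m-1,m,m+1\}$; the case $m$ feeds $(TDC)$ with apex $u$ and edge $yy_1$, the case $m+1$ feeds $(QC)$, and the maximality of $m$ rules out the escape in which the new quadruple $u,y,y_1,v$ is itself a $(J0')$ counterexample. Without this (or an equivalent device) your ``iterated walk-back'' has no termination or non-collision guarantee, which is exactly the difficulty you name but do not overcome.

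In the reverse direction your $(QC)$ argument also stops short: the witness $z\in I(U,Y)\cap I(V,W)\setminus\{U,V,Y,W\}$ is a common neighbour of $V$ and $W$, but if $z$ is adjacent to $Y$ then $d(U,z)=m$ is perfectly consistent with $z\in I(U,Y)$, and $(QC)$ is not yet verified; you acknowledge this and gesture at ``a second application of $(J0')$'' or ``an interval-minimality argument'' without carrying either out. (The paper sidesteps the issue by arguing in the opposite direction: it assumes $(QC)$ or $(TDC)$ fails and exhibits a quadruple violating $(J0')$ directly.) A similar unexamined case occurs in your $(TDC)$ argument, where the witness $z$ is adjacent to $x$ and $w$ but its adjacency to $v$ is asserted rather than derived. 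As it stands, both directions contain acknowledged but unfilled gaps, so the proposal is a plan rather than a proof.
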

\begin{proof}
We prove the contra-positive of the statement of the theorem.
 That is, we prove that the interval function $I_G$ of a connected graph $G$ doesn't satisfy the axiom $(J0')$ if and only if $G$ is not a diamond-weakly modular graph. 
 First assume $G$ is not a diamond-weakly modular graph. Then the distance function $d$ doesn't satisfy either  $(TDC)$ or $(QC)$ or both on $G$.  Assume that $d$ doesn't satisfy the $(TDC)$. That is, for any three vertices $u, v, y$ with $1 = d(v, y) < d(u, v) = d(u, y)=k$ and there does not exists a common neighbor $z$ of $v,x,x'$ and $y$ such that $d(u, z) = d(u, v) - 1$ where $x$ is the neighbor of $y$ in the $u,y$-shortest path and $x'$ is the neighbor of $v$ in the $u,v$-shortest path. Then clearly, $x\in I(u,y)$, $y\in I(x,v)$ and $I(u,y)\cap I(x,v)\subset \{u,x,y,v\}$, but $x\notin I(u,v)$.  So $I$ do not satisfy axiom $(J0')$ on $G$. Now assume that $d$ doesn't satisfy the $(QC)$. That is, for any four vertices $u,v, w, y$ with $d(v, w) = d(w, y) = 1$ and $2 = d(v, y) \leq d(u, v) = d(u, y) = d(u, w) - 1$ and there does not exists a common neighbor $z$ of $v$ and $y$ such that $d(u, z) = d(u, v) - 1$. Let $x$ be the neighbor of $y$ in the $u,y$-shortest path. Then clearly $x\in I(u,y),$ $y\in I(x,v)$ and $I(u,y)\cap I(x,v)\subset \{u,x,y,v\}$, but  $x\notin I(u,v)$. So $I$ does not satisfy axiom $(J0')$ on $G$.
 
 To prove the converse part: Assume that the interval function $I$ of a connected graph $G$ doesn't satisfy the axiom $(J0')$. That is, there exists vertices $u,x,y,v$ in $G$ such that $x\in I(u,y)$, $y\in I(x,v)$, $I(u,y)\cap I(x,v) \subset \{u,x,y,v\}$ and $x\notin I(u,v)$. That is, there exist a $u,y$-shortest path containing $x$ and a $x,v$-shortest path containing $y$ such that $x$ doesn't belong to a $u,v$-shortest path in $G$. From this, we infer that $x$ and $y$ are adjacent and that $d(u,y),d(x,v), d( u,v) \geq 2$. Without loss of generality, we can take the vertices $u,x,y,v$ such that $y$ is at a maximum distance  say $m$ from $u$ and let $d(y,v)=n$. 

   Consider the vertex $y_1 \in I(y,v)$ which is adjacent to $y$. We may choose $y_1$ such that the distance  $d(u,y_1)$ is minimum. Then the possibilities of  $d(u,y_1)$ are $m-1$, $m$ or $m+1$. If $d(u,y_1) = m-1$, then $y_1 \in I(u,y) \cap I(x,v)$, which is a contradiction to the assumption that $u,x,y,v$ does not satisfy axiom $(J0')$, so that $d(u,y_1) = m-1$ is not possible.

When $d(u,y_1) = m$: consider the vertices $u$, $x$, $y$, $y_1$. Clearly, these vertices do not satisfy the axiom $(J0')$. Let $y^\prime \in I(u,y_1)$ which is adjacent to $y_1$. Then, $d(u,y)=d(u,y_1) = m$ and $d(y,y_1) = 1$. If possible, let the vertices $u$, $y$, $y_1$ satisfies $(TDC)$. Then there exists a vertex $z$ adjacent to both $y$ and $y_1$ with $d(u,z)=m-1$ and $z$ is adjacent to both $x$ and  $y^\prime$. Then, $z\in I(u,y)\cap I(x,v)$ and so $I(u,y) \cap I(x,v)\not\subset \{u,x,y,v\}$ and hence the vertices $u$, $x$, $y$, $v$ satisfies the axiom $(J0')$. Hence the vertices $u$,  $y$, $y_1$ will not satisfy $(TDC)$  and so $G$ is not diamond-weakly modular graph.

When $d(u,y_1) = m+1$: consider the vertices $u$, $y$, $y_1$, $v$. Clearly, $y\in I(u,y_1)$ and $y_1 \in I(y,v)$. If $I(u,y_1) \cap I(y,v) \subset \{u,y_1,y,v \}$, then either $y \in I(u,v)$ or $y\notin I(u,v)$. But both of these will yield a contradiction. For, if $y \in I(u,v)$ then $x\in I(u,v)$ (by axiom $(b2)$); if $y \notin I(u,v)$, then we get four vertices $u$, $y$, $y_1$, $v$ which does not satisfy $(J0')$ and $d(u,y_1)>m$, a contradiction to the maximality of $d(u,y)$. So this implies that $I(u,y_1) \cap I(y,v) \not\subset \{u,y_1,y,v \}$. Then, there exist a $w\neq u,y_1,y,v$ such that $w\in I(u,y_1) \cap I(y,v)$. Let $P$ be the shortest $uy_1$-path containing $w$ and   $Q$ be the shortest $y,v$-path containing $w$. Also let  $t$  be the length of shortest $u,w$- subpath of $P$ and $\ell $ be the length of shortest $w,y$- subpath of $Q$ .\\
\textbf{Claim}: $\ell+t=m+2$.  We have $d(u,y)=m$ and $d(u,y_1)=m+1$. So possibilities of $\ell+t $ are $m,m+1$ and $m+2$. Let $w_1$ be the neighbor of $y $ in the shortest $w,y$- subpath of $Q$. Also $w_1 \in I(y,v)$. So $d(u,w_1)=m-1$ and $d(u,w_1)=m$ are not possible, since it is a contradiction to the choice of $y_1$ that $d(u,y_1)$ minimum.
Therefore $\ell+t=m+2$. Hence the claim. That is  $ w \rightarrow y_1\rightarrow y$ is a shortest $w,y$- path.  So can replace $w_1$ with $y_1$.  Consider the  vertices $u$, $y$, $y_1$ $y^{\prime}$, it is clear that $d(u,y)=d(u,y^{\prime}) =m$ and $d(y,y^{\prime})=2$. If $d$ satisfies $(QC)$, then there exist a $z$ which is adjacent to $y$ and $y^{\prime}$ with $d(u,z)=m-1$. Then $z\in I(y,w)$ and since $ I(y,w)\subseteq I(y,v)$  implies that $z\in I(y,v) $. Also $z \in  I(u,y)$ and hence $z \in  I(u,y)\cap I(x,v)$. So the vertices $u$, $x$, $y$, 
satisfies $(J0')$, contradiction. Hence  $d$  does not satisfy $(QC)$ on $u,y,y^{\prime}$ and hence $G$ is not diamond-weakly modular.

That is, we have proved that if $I$ doesn't satisfy axiom $(J0')$, then $G$  doesn't satisfy $(TDC)$ or  $(QC)$ and hence $G$ is not a diamond-weakly modular graph.

\end{proof}

From  Proposition~\ref{Ws1}, Theorem~\ref{interval}, Theorem~\ref{b2}, Theorem~\ref{TDC} and  we have the following theorem.
\begin{theorem}\label{WMD}
Let $R$ be a function from $V\times V$ to $2^V$, where $V$ is a non empty set. Then $R$  satisfies the axioms $ (t1), (t2), (t3), (b3), (J0')$ and $(ta)$   if and only if  $G_R$ is a diamond-weakly modular graph and $R$ coincides with the interval function $I_{G_R}$ of $G_R$.
\end{theorem}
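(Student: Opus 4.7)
The plan is to assemble the theorem by chaining results already proved in the paper. For the forward direction, I would suppose that $R$ satisfies $(t1), (t2), (t3), (b3), (J0'), (ta)$ and aim to show both $R = I_{G_R}$ and that $G_R$ is diamond-weakly modular. First I would invoke Theorem~\ref{b2}, which gives both $(b2)$ and the connectedness of $G_R$. A short standalone check would then confirm that $(b3), (t2), (t3)$ alone imply the axiom usually called $(b1)$ (i.e.\ $y \in R(u,v), y \ne u \Rightarrow u \notin R(y,v)$), via the observation that if $u \in R(y,v) = R(v,y)$ then applying $(b3)$ to $y \in R(v,u)$ forces $u \in R(y,y) = \{y\}$ by $(t3)$, a contradiction; and then $(b1)$ combined with $(b3)$ yields $(b4)$ immediately, since $y \in R(u,x) \cap R(x,v)$ with $y \ne x$ would give both $x \in R(y,v)$ (by $(b3)$) and $x \notin R(y,v)$ (by $(b1)$ applied to $y \in R(x,v)$).

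With $(b2), (b3), (b4)$ in hand alongside $(ta)$, Proposition~\ref{Ws1} supplies $(s1)$ and $(s2)$, so every hypothesis of the Mulder-Nebesk\'y characterization (Theorem~\ref{interval}) is met and $R = I_{G_R}$. Because $R$ satisfies $(J0')$, so does $I_{G_R}$, and Theorem~\ref{TDC} then forces $G_R$ to be diamond-weakly modular, completing this direction. For the converse, I would assume $R = I_{G_R}$ and that $G_R$ is diamond-weakly modular. The axioms $(t1), (t2), (t3), (b3)$ are standard for the interval function of any connected graph. Theorem~\ref{TDC} supplies $(J0')$ directly. Finally, because every diamond-weakly modular graph is weakly modular (the strengthened triangle condition $(TDC)$ subsumes $(TC)$), the result of Bandelt and Chepoi from \cite{ch-helly} cited in Section~\ref{relations}, which asserts $(ta)$ for the interval function of any weakly modular graph, furnishes $(ta)$ for $I_{G_R}$.

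The only real piece of bookkeeping I anticipate is the verification that $(b4)$, which is formally listed as a separate requirement in Theorem~\ref{interval}, is nonetheless implicitly present among our hypotheses via $(b3), (t2), (t3)$; this is the single non-trivial micro-lemma in the argument. Once this is checked, the proof reduces to a mechanical chain of invocations of Theorem~\ref{b2}, Proposition~\ref{Ws1}, Theorem~\ref{interval}, and Theorem~\ref{TDC}, with no further conceptual obstacle.
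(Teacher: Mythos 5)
Your proof is correct and follows essentially the same route as the paper, which likewise obtains the theorem by chaining Theorem~\ref{b2}, Proposition~\ref{Ws1}, Theorem~\ref{interval}, and Theorem~\ref{TDC}. Your explicit derivation of $(b4)$ from $(b3)$, $(t2)$, $(t3)$ via $(b1)$ is a detail the paper leaves implicit, and it is a correct and welcome addition since Theorem~\ref{interval} formally requires $(b4)$.
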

The examples below establishes the independence of the axioms $(t1)$, $(t2)$, $(t3)$, $(b3)$, $(J0')$ and $(ta)$.
\begin{example}[$(t2)$, $(t3)$, $(b3)$, $(J0')$ and $(ta)$ but not $(t1)$]$~$\label{ex1}\\
	Let $V=\{u,v,x,y\}$ and define a transit function $R$ on $V$ as $R(u,v)=R(v,u) =\{u\},$ $R(u,x)=\{u,x\}$ $R(u,y)=\{u,x,y\}, R(v,x)=\{v,x\},$ $R(v,y)=\{v,x,y\}$  $R(x,y)=\{x,y\}$, $R(a,a)= \{a\}$ and $R(a,b)=R(b,a)$ for all $a,b\in V$.  We can see that $R$ satisfies $(t2)$, $(t3)$, $(b3)$, $(J0')$ and $(ta)$.  But $R$ does not satisfy axiom $(t1)$, since $v\notin R(u,v)$ and $v\notin R(v,u)$.
	\end{example}
	\begin{example}[$(t1)$, $(t3)$, $(b3)$, $(J0')$ and $(ta)$ but not $(t2)$]$~$\label{ex2}\\
	Let $V=\{u,v,x,y\}$ and define a transit function $R$ on $V$ as follows: $R(u,v)=\{u,v\}=R(v,u), R(u,x)=\{u,v,x\}$, $R(x,u)=\{x,u\}$  $R(u,y)=\{u,v,x,y\}=R(y,u), R(v,x)=\{v,x\}= R(x,v),$ $R(v,y)=\{v,x,y\}$, $R(y,v)=\{y,v\},$ $R(x,y)=\{x,y\}=R(y,x)$, $R(a,a)= \{a\}$ for all $a\in V$.  We can see that $R$ satisfies $(t1)$, $(t3)$, $(b3)$, $(J0')$ and $(ta)$.  But $R(u,v) \neq R(v,u)$.  Therefore $R$ does not satisfy the $(t2)$ axiom.
\end{example}
	\begin{example}[$(t1)$, $(t2)$, $(b3)$, $(J0')$ and $(ta)$ but not $(t3)$]$~$\label{ex3}\\
	Let $V=\{u,v,x,y\}$ and define a transit function $R$ on $V$ as follows: $R(u,u)=\{u,v\}, R(u,v)=\{u,v\}, R(u,x)=\{u,x\}$,  $R(u,y)=\{u,y\}, R(v,x)=\{v,u,x\},$ $R(v,y)=\{v,u,y\}$  $R(x,y)=\{x,y\}$, $R(a,a)= \{a\}$ and $R(a,b)=R(b,a)$ for all $a,b \in V$.  We can see that $R$ satisfies $(t1)$, $(t2)$, $(b3)$, $(J0')$ and $(ta)$.  But $v \in R(u,u) $.  Therefore $R$ does not satisfy the $(t3)$ axiom.
\end{example}
	\begin{example}[$(t1)$, $(t2)$, $(t3)$, $ (b3)$, $(ta)$ but not $(J0')$]$~$\\
	Let $V=\{u,v,w,x,y,z\}$ and define a transit function $R$ on $V$ as follows: $R(u,v)=\{u,w,v,z\}, R(x,v)=\{x,w,y,v\}$, $R(x,z)=\{x,y,u,z\}$  $R(w,z)=\{w,u,v,z\}, R(w,y)=\{w,x,v,y\},$ and $R(a,a)= \{a\}$ and $R(a,b)=R(b,a)$ for all $a,b \in V$.    We can see that $R$ satisfies $(t1)$, $(t2)$, $(t3)$, $ (b3)$, $ (ta)$. But $x\in R(u,y), y\in R(x,v), R(u,y)\cap R(x,v) \subset \{u,x,y,v\}$ and $ x\notin R(u,v)$  Therefore $R$ does not satisfy the $(J0')$ axiom.
\end{example}

\begin{example}[$(t1)$, $(t2)$, $(t3)$, $ (b3)$, $(J0')$ but not $(ta)$]$~$\\
	Let $V=\{u,v,w,x,y\}$ and define a transit function $R$ on $V$ as follows: $R(u,w)=\{u,x,y,w\}, R(v,x)=\{v,y,w,x\}$ and $R(a,a)= \{a\}$ and $R(a,b)=R(b,a)$ for all $a,b \in V$.  We can see that $R$ satisfies $(t1)$, $(t2)$, $(t3)$, $ (b3)$. Also $w\in R(v,x), x\in R(w,u)$ and $ R(v,x)\cap R(w,u)= \{y,x,w\}\not\subset \{v,x,w,u\}$. That is $R$ satisfy $(J0')$. But  $ R( u,v) \cap R( u,w) =\{  u \} , R( u,v) \cap R( v,w) =\{  v \}, R( u,w) \cap R( v,w) =\{  w \} $ and $ R( u,v) = \{ u,v  \}  ,  R( v,w) = \{ v,w  \}   $ and $  R( u, w) = \{ u,x,y, w  \}$. Therefore $R$ does not satisfy the $(ta)$ axiom.
\end{example}
\begin{example}[$(t1)$, $(t2)$, $(t3)$, $(ta)$,  $(J0')$ but not $(b3)$]$~$\label{exb3}\\
	Let $V=\{u,v,w,x,y\}$ and define a transit function $R$ on $V$ as follows: $R(u,y)=\{u,x,y,w\}, R(x,v)=\{x,y,v,w\}$,   $R(u,v)=V$ and $R(a,a)= \{a\}$ and $R(a,b)=R(b,a)$ for all $a,b \in V$.   We can see that $R$ satisfies $(t1)$, $(t2)$, $(t3)$, $ (J0')$ and $(ta)$.  But $y\in R(u,v), w\in R(u,y)$, and $y\notin R(w,v)$. Therefore $R$ does not satisfy the $(b3)$ axiom.
\end{example}

\section{Interval function of bridged graphs and  weakly bridged graphs}\label{Bridged}

In this section, we characterize the interval function of bridged graphs using axioms $(J0')$ and  $(br)$ and weakly bridged graphs by $(J0')$ and  $(br')$ on an arbitrary transit function.
First, we prove the following Theorem characterizing the interval function  $I$ of the bridged graph.

\begin{theorem}\label{bridged}
	Let $G$ be a graph. The interval function $I_G$ satisfies the axioms $(J0')$ and $(br)$ if and only if $G$ is a bridged graph.
\end{theorem}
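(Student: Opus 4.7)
The plan is to prove the biconditional by combining the characterization of diamond-weakly modular graphs via $(J0')$ from Theorem~\ref{TDC} with the well-bridged cycle property (Theorem~\ref{well-bridged}) on the forward side, and a cycle-extraction argument on the reverse side.

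For the forward direction, assume $G$ is bridged. By Lemma~\ref{BrDWM} the graph $G$ is diamond-weakly modular, so Theorem~\ref{TDC} delivers $(J0')$ on $I_G$ with no further work. To verify $(br)$, fix $u,v,x,y,z$ with $I_G(x,y)=\{x,y\}$, $I_G(x,u)=\{x,u\}$, $I_G(v,y)=\{v,y\}$ and $z\in I_G(u,v)$. The triangle inequality along $u$-$x$-$y$-$v$ forces $d(u,v)\le 3$, and I case-split on this distance. The cases $d(u,v)\le 1$ are immediate because $z\in\{u,v\}$. If $d(u,v)=2$ and $z\notin\{u,v,x,y\}$, then $z$ is a common neighbor of $u$ and $v$, and the $5$-cycle $u$-$x$-$y$-$v$-$z$-$u$ cannot be induced (as $C_5$ is an isometric subgraph of itself, forbidden in a bridged graph); a chord analysis, combined with the forbidden induced $C_4$ applied to the $4$-cycles that arise when the chord is $uy$ or $xv$, yields $xz\in E(G)$ or $yz\in E(G)$. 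If $d(u,v)=3$, length counts give $x,y\in I_G(u,v)$, and writing a shortest $u,v$-path through $z$ as $u$-$z_1$-$z_2$-$v$ produces the $6$-cycle $C:u$-$x$-$y$-$v$-$z_2$-$z_1$-$u$. Applying Theorem~\ref{well-bridged} at the vertex $u$, whose antipode $v$ on $C$ satisfies $d_G(u,v)=d_C(u,v)=3$, rules out the shortcut alternative and forces the two cycle-neighbors $x$ and $z_1$ of $u$ to be adjacent, giving $xz\in E(G)$ when $z=z_1$; the symmetric application at $v$ handles $z=z_2$.

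For the reverse direction, assume $I_G$ satisfies $(J0')$ and $(br)$. Theorem~\ref{TDC} yields that $G$ is diamond-weakly modular, so it remains to exclude isometric cycles of length at least $4$. Suppose for contradiction that $G$ contains such a cycle $C=c_0c_1\cdots c_{k-1}$, which is automatically induced. For $k\in\{5,6\}$, apply $(br)$ to $(u,x,y,v,z)=(c_0,c_1,c_2,c_3,c_{k-1})$: isometry places $c_{k-1}$ on a shortest $c_0,c_3$-path so $z\in I_G(u,v)$, while both $c_1c_{k-1}$ and $c_2c_{k-1}$ are non-edges because $C$ is induced, contradicting $(br)$. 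For $k\ge 7$, apply $(J0')$ to $(u,x,y,v)=(c_0,c_1,c_2,c_j)$ with $j=\lfloor k/2\rfloor+1$: uniqueness of shortest paths in the isometric $C$ gives $I_G(c_0,c_2)=\{c_0,c_1,c_2\}$ and $I_G(c_1,c_j)=\{c_1,c_2,\ldots,c_j\}$, so $I_G(c_0,c_2)\cap I_G(c_1,c_j)=\{c_1,c_2\}\subset\{u,x,y,v\}$, while the $c_0,c_j$-geodesic travels the far side of $C$, giving $c_1\notin I_G(c_0,c_j)$, which contradicts the conclusion forced by $(J0')$. For $k=4$, $(J0')$ applied to $(c_0,c_1,c_2,c_3)$ works when no common neighbor of the four cycle-vertices exists; the wheel-like sub-case with such a hub is ruled out by using the diamond structure produced by $(TDC)$ (already in force via $(J0')$) to set up a non-trivial tuple for $(br)$.

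The main obstacle is this last sub-case of the reverse direction, namely an isometric $C_4$ carrying a universal ``hub'' neighbor. Here $(J0')$ is in danger of becoming vacuous because the hub breaks the intersection condition $I_G(u,y)\cap I_G(x,v)\subset\{u,x,y,v\}$, and a naive application of $(br)$ to the four rim vertices yields no information, so the key is to leverage the diamond-weakly modular machinery forced by $(J0')$ together with $(br)$ applied to a carefully chosen tuple involving the hub to extract the required contradiction.
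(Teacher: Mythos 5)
Your direction from ``$G$ bridged'' to the axioms is sound, and in one spot a bit cleaner than the paper's: for $d(u,v)=3$ you invoke the well-bridged cycle property (Theorem~\ref{well-bridged}) on the $6$-cycle to force $xz_1\in E(G)$ or $yz_2\in E(G)$ directly, whereas the paper extracts an induced $C_4$ or $C_5$ from that same $6$-cycle; both are legitimate. The problem is the converse, where you try to kill every isometric cycle of length at least $4$ by hand. Two steps there do not work. First, for $k\ge 7$ you claim $I_G(c_0,c_2)=\{c_0,c_1,c_2\}$ and $I_G(c_1,c_j)=\{c_1,\dots,c_j\}$ by ``uniqueness of shortest paths in the isometric $C$''; but isometry of $C$ only says that distances along $C$ realize the graph distances, not that every geodesic of $G$ between two vertices of $C$ stays on $C$. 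The graph may contain many common neighbours of $c_0$ and $c_2$ off the cycle, so the hypothesis $I_G(u,y)\cap I_G(x,v)\subset\{u,x,y,v\}$ of $(J0')$ is unverified and no contradiction is obtained. Second, your $k=4$ sub-case with a universal hub is explicitly left open (``the key is to leverage \dots'' is a plan, not a proof); in fact no extra machinery is needed, because $(br)$ is quantified over all $u,v,x,y,z$ and hence allows $x=y$: on an induced $C_4=c_0c_1c_2c_3$ take $u=c_0$, $x=y=c_1$, $v=c_2$, $z=c_3$, so all three edge hypotheses hold, $z\in I_G(u,v)$ since $d(c_0,c_2)=2$, and $c_1c_3\notin E(G)$ violates $(br)$ with no reference to $(J0')$ or to any hub at all.

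The underlying issue is the decomposition. The paper never tries to exclude long isometric cycles directly; it invokes Chepoi's theorem that bridged graphs are exactly the weakly modular graphs with no induced $C_4$ and no induced $C_5$. Since $(J0')$ already yields (diamond-)weak modularity via Theorem~\ref{TDC}, the converse then reduces to showing that an induced $C_4$ or $C_5$ violates $(br)$, which is the short computation above. Without that reduction, your $k\ge 7$ case needs a genuinely new argument, and as written it is a gap.
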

\begin{proof}
First assume that $I$ satisfy axioms $(J0')$ and $(br)$ on $G$. We have to show that $G$ is a bridged graph. By Theorem~\ref{TDC}, the interval function $I$ on a graph $G$ satisfies $(J0')$ if and only if $G$ is a diamond-weakly modular graph. That is, if $I$ satisfy axiom $(J0')$,  then $G$ is a diamond-weakly modular graph which is a subclass of weakly modular graphs.  According to the result by Chepoi et al. \cite{Ch_metric}, bridged graphs are exactly weakly modular graphs that do not contain induced $C_4$ and $C_5$. Now it is enough to show that $I$ satisfy $(br)$, then $G$ is $(C_4,C_5)-$ free.   If $G$ contains an induced   $C_5$ with consecutive vertices as  $u,x,y,v,z$, it is clear that  $I$ does not satisfy $(br)$ on  $C_5$, since $I(x,y)=\{x,y\}, I(x,u)=\{x,u\}, I(v,y)=\{v,y\} $ and $z\in I(u,v) $ and $ I(x,z)\neq \{x,z\}$ and $I(y,z)\neq \{y,z\}$. Similarly, if  $G$ contains an induced   $C_4$ with consecutive vertices as $u,x=y,v,z$,  it is clear that  $I$ does not satisfy $(br)$ on  $C_4$. 

Conversely, suppose that $G$ is a bridged graph. We have to show that  $I$  satisfy axioms $(J0')$ and $(br)$. Since bridged graphs are diamond-weakly modular graphs,   $I$  satisfy axiom $(J0')$ on $G$. It remains  to show that $I$  satisfy axiom $(br)$. Suppose not. That is $I(x,y)=\{x,y\}, I(x,u)=\{x,u\}, I(v,y)=\{v,y\} $ and $z\in I(u,v) $ and $ I(x,z)\neq \{x,z\}$ and $I(y,z)\neq \{y,z\}$. Let $P:uxyv$ be the $u,v$ path containing $x$ and $y$. Since $z\in I(u,v) $, $ I(x,z)\neq \{x,z\}$ and $I(y,z)\neq \{y,z\}$, there exist a $u,v$- shortest path containing $z$ which is different from the path $P$.
Then possibility of $d(u,v)$ are $3$ and $2$. When $d(u,v)=3$, then $z$ is either adjacent to $u$ or $v$. Without loss of generality, we may assume $z$ is adjacent to $u$. Let $z\prime$ be the neighbor of $z$  in the $u,v$- shortest path. Then $u,x,y,v,z\prime, z$ will form a cycle of length $6$.  If $z\prime y \in E(G)$ and $z\prime x \notin E(G)$, the vertices $u,x,y,z\prime ,z$ will induce a cycle of length $5$.   If  both $z\prime y, z\prime x \in E(G)$, then the vertices $u,x,z\prime,z$ will induce a $C_4$. When $d(u,v)=2$, then $z$ is  adjacent to  both $u$ and $v$. Then the vertices $u,x,y,z,v$ will induce a cycle of length $5$. If $uy\in E(G)$, the vertices $u,y,z,v$ will induce a cycle of length $4$ and if $xv\in E(G)$, the vertices $u,x,v,z$ will induce a cycle of length $4$. When $x=y$, the vertices $u,x=y,z,v$ will induce a $4$-cycle. 
In all cases, we get a contradiction to the assumption that $G$ is a bridged graph.
\end{proof}
From  Proposition~\ref{Ws1}, Theorem~\ref{interval}, Theorem~\ref{b2},  and Theorem~\ref{bridged}, we have the following theorem.
\begin{theorem}\label{bri}
Let $R$ be a function from $V\times V$ to $2^V$, where $V$ is a non empty set. Then $R$  satisfies the axioms $(t1)$, $(t2)$, $(t3)$, $(J0')$, $(b3)$, $(ta) $ and $(br)$   if and only if  $G_R$ is a bridged graph and $R$ coincides with the interval function $I_{G_R}$.
\end{theorem}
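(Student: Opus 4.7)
The plan is to prove Theorem~\ref{bri} as a synthesis of the preceding results, splitting into the two implications of the ``if and only if.''

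For the forward direction (sufficiency), assume $R$ satisfies $(t1), (t2), (t3), (J0'), (b3), (ta)$ and $(br)$. First I would invoke Theorem~\ref{b2}: since $R$ satisfies $(J0')$ and $(b3)$, it follows that $R$ also satisfies $(b2)$ and that the underlying graph $G_R$ is connected. Next, Proposition~\ref{Ws1} applies, because $R$ satisfies $(b2), (b3)$ and $(ta)$; this yields the Mulder--Nebesk\'y axioms $(s1)$ and $(s2)$. Combined with $(t1), (t2), (b2), (b3)$ (and the betweenness axiom $(b4)$, which must be extracted from the assumed axioms, most naturally by combining $(b3)$ with $(ta)$ in a short separate verification), Theorem~\ref{interval} then delivers $R = I_{G_R}$. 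At this point the hypotheses $(J0')$ and $(br)$ on $R$ become statements about $I_{G_R}$, and Theorem~\ref{bridged} identifies $G_R$ as a bridged graph.

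For the backward direction (necessity), assume $G_R$ is a bridged graph with $R = I_{G_R}$. The interval function of any connected graph is well known to satisfy $(t1), (t2), (t3)$ and $(b3)$, so these are immediate. Since bridged graphs are diamond-weakly modular by Lemma~\ref{BrDWM}, Theorem~\ref{TDC} shows $I_{G_R}$ satisfies $(J0')$. The axiom $(br)$ is obtained from the bridged-implies-$(br)$ direction already established inside Theorem~\ref{bridged}. Finally, $(ta)$ holds for the interval function of any weakly modular graph by the result of Bandelt and Chepoi \cite{ch-helly}, and bridged graphs are weakly modular, so $(ta)$ holds for $I_{G_R}$.

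The main obstacle is not any new combinatorial argument but making sure the axiom pool used by Theorem~\ref{interval} is fully supplied; in particular that $(b4)$ is legitimately derivable from $\{(t1),(t2),(t3),(J0'),(b3),(ta)\}$ so that one can invoke the Mulder--Nebesk\'y characterization without gap. Once this small bookkeeping is discharged, the theorem reduces to a mechanical chaining of Theorem~\ref{b2}, Proposition~\ref{Ws1}, Theorem~\ref{interval}, Theorem~\ref{TDC} and Theorem~\ref{bridged}, exactly parallel to how Theorem~\ref{WMD} was concluded, with Theorem~\ref{bridged} replacing Theorem~\ref{TDC} in the role of characterizing which subclass of weakly modular graphs has appeared.
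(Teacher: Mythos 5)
Your proposal is correct and follows essentially the same route as the paper, which proves this theorem in a single line by chaining Theorem~\ref{b2}, Proposition~\ref{Ws1}, Theorem~\ref{interval} and Theorem~\ref{bridged} exactly as you do. Your worry about supplying $(b4)$ is well placed (the paper silently skips this point), but it is easily discharged without $(ta)$: $(b3)$ together with $(t3)$ yields $(b1)$, and then from $x\in R(u,v)$ and $y\in R(u,x)\cap R(x,v)$ one gets $x\in R(y,v)$ by $(b3)$, which with $y\in R(x,v)$, $(t2)$ and $(b1)$ forces $y=x$, i.e.\ $R(u,x)\cap R(x,v)=\{x\}$.
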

The examples below establish the independence of the axioms $(t1)$, $(t2)$, $(t3)$, $(b3)$, $(br)$, $(J0')$ and $(ta)$.
 The function defined in Example~\ref{ex1} satisfy axioms  $(t2)$, $(t3)$, $(b3)$, $(br)$, $(J0')$ and $(ta)$ but does not satisfy axiom $(t1)$. The function defined in Example~\ref{ex2} satisfy axioms  $(t1)$, $(t3)$, $(b3)$, $(br)$, $(J0')$ and $(ta)$ but does not satisfy axiom $(t2)$. The function defined in Example~\ref{ex3} satisfy axioms  $(t1)$, $(t2)$, $(b3)$, $(br)$, $(J0')$ and $(ta)$ but does not satisfy axiom $(t3)$. 
\begin{example}[$(t1)$, $(t2)$, $(t3)$, $(ta)$, $(br)$, $(b3)$ but not $(J0')$]$~$\\\label{e1}
    Let $V=V(C_8)$ and $R=I$ on $C_8$. The consecutive vertices of $C_8$ be $v_1, v_2,..., v_8$. Then $R$ satisfies axioms $(t1)$, $(t2)$, $(t3)$, $(br)$, $(ta)$ and $(b3)$. Assume $u=v_1, x=v_3, y=v_4$ and $v= v_6$. Then $x\in R(u,y), y\in R(x,v), R(u,y)\cap R(x,v) \subset \{u,x,y,v\}$ and $ x\notin R(u,v)$.  Therefore $R$ does not satisfy the $(J0')$ axiom.
\end{example}
\begin{example}[$(t1)$, $(t2)$, $(t3)$, $(br)$, $(b3)$, $(J0')$ but not $(ta)$]$~$\\\label{e2}
	Let $V=\{u,v,w,x,y\}$ and define a transit function $R$ on $V$ as follows: $R(u,w)=\{u,y,w\}, R(v,x)=\{v,y,w,x\}$, $R(u,x)=\{u,y,x\}$ and $R(a,a)= \{a\}$ and $R(a,b)=R(b,a)$ for all $a,b \in V$.  We can see that $R$ satisfies $(t1)$, $(t2)$, $(t3)$, $ (b3)$, $(br)$ and $(J0')$. But  $ R( u,v) \cap R( u,w) =\{  u \} , R( u,v) \cap R( v,w) =\{  v \}, R( u,w) \cap R( v,w) =\{  w \} $ and $ R( u,v) = \{ u,v  \}  ,  R( v,w) = \{ v,w  \}   $ and $  R( u, w) = \{ u,y, w  \}$. Therefore $R$ does not satisfy the $(ta)$ axiom.
\end{example}
\begin{example}[$(t1)$, $(t2)$, $(t3)$, $(ta)$, $(br)$, $(J0')$ but not $(b3)$]$~$\label{e3}\\
	Let $V=\{u,v,w,x,y\}$ and define a transit function $R$ on $V$ as follows: $R(u,y)=\{u,x,y,w\}, R(x,v)=\{x,y,v,w\}$,   $R(u,v)=V$ and $R(a,a)= \{a\}$ and $R(a,b)=R(b,a)$ for all $a,b \in V$.   We can see that $R$ satisfies $(t1)$, $(t2)$, $(t3)$, $ (J0')$, $(br)$ and $(ta)$.  But $y\in R(u,v), w\in R(u,y)$, and $y\notin R(w,v)$.  Therefore $R$ does not satisfy the $(b3)$ axiom.
\end{example}
\begin{example}[$(t1)$, $(t2)$, $(t3)$, $(ta)$, $(b3)$, $(J0')$ but not $(br)$]$~$\label{e4}\\
	Let $V=\{u,v,x,y,z,w\}$ and define a transit function $R$ on $V$ as follows: $R(u,y)=\{u,x,z,y\}$,$R(u,v)=\{u,v,z,w\} R(x,v)=\{x,y,z,v\}$, $R(x,w)=\{x,w,z,u\}$,$R(w,y)=\{w,y,z,v\}$   and $R(a,a)= \{a\}$ and $R(a,b)=R(b,a)$ for all $a,b \in V$.   We can see that $R$ satisfies $(t1)$, $(t2)$, $(t3)$, $ (J0')$, $(ta)$ and $(b3)$.  $R$ does not satisfy the $(br)$, since $I(x,y)=\{x,y\}, I(x,u)=\{x,u\}, I(v,y)=\{v,y\} $, $z\in I(u,v) $ and $ I(x,z)\neq \{x,z\}$ and $I(y,z)\neq \{y,z\}$.
\end{example}
 For the characterization of the interval function of weakly bridged graphs, we need the Lemma~\ref{wbrid}. From the definition of  axioms $(br)$ and $(br')$, we can see that axiom $(br')$ is easily followed from axiom $(br)$ by taking $x=y$.
 
   The axioms $(br')$ and $(J0')$ characterize  the interval function of weakly bridged graphs. We have the following theorems in which the proof is similar to  that of theorems, Theorem~\ref{bridged} and ~\ref{bri}. 
\begin{theorem}\label{wbridged}
	Let $G$ be a graph. The interval function $I_G$ satisfies the axioms $(J0')$ and $(br')$ if and only if $G$ is a weakly bridged graph.
\end{theorem}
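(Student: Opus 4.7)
The plan is to adapt the proof of Theorem~\ref{bridged}, simplifying it by replacing the two forbidden induced subgraphs $C_4, C_5$ with only $C_4$, since weakly bridged graphs are by definition the weakly modular graphs without induced $4$-cycles.

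For the forward direction, I would assume $I_G$ satisfies $(J0')$ and $(br')$. Theorem~\ref{TDC} applied to $(J0')$ immediately delivers that $G$ is diamond-weakly modular, and in particular weakly modular. To see that $G$ contains no induced $C_4$, I plan to argue by contradiction: if $u,x,v,z$ are consecutive vertices of an induced $4$-cycle, then $I(u,x)=\{u,x\}$ and $I(x,v)=\{x,v\}$ (as they are edges), and $z\in I(u,v)$ because $u$--$z$--$v$ is a shortest $u,v$-path of length $2$. Axiom $(br')$ would then force $I(x,z)=\{x,z\}$, i.e. $x\sim z$, contradicting that the $4$-cycle is induced. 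Hence $G$ is weakly bridged.

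For the converse, I would assume $G$ is weakly bridged. By Lemma~\ref{wbrid}, $G$ is diamond-weakly modular, and Theorem~\ref{TDC} then gives that $I_G$ satisfies $(J0')$. To verify $(br')$, take $u,v,x,z$ with $I(u,x)=\{u,x\}$, $I(x,v)=\{x,v\}$ and $z\in I(u,v)$; I need $I(x,z)=\{x,z\}$. Since $u$--$x$--$v$ is a shortest path, $d(u,v)\le 2$. After disposing of the degenerate cases $d(u,v)\le 1$ (where $z\in\{u,v\}$ and the conclusion is immediate) and $z=x$, the remaining case is $d(u,v)=2$ with $z\neq x$. Here $z$ must be adjacent to both $u$ and $v$, so $u,x,v,z$ form a $4$-cycle. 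The chord $uv$ is excluded because $d(u,v)=2$, so the only way for this cycle to be non-induced, which is forced by $G$ being weakly bridged, is that the other diagonal $xz$ is an edge, giving $I(x,z)=\{x,z\}$.

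The main technical point, though essentially routine, will be the careful treatment of the degenerate cases in verifying $(br')$ and the observation that in the $4$-cycle $u,x,v,z$ with $d(u,v)=2$ the only possible chord is $xz$; modulo this bookkeeping, the argument is a direct simplification of the proof of Theorem~\ref{bridged} with the $C_5$ case suppressed.
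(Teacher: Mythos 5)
Your proposal is correct and follows essentially the same route the paper intends: the paper does not write out this proof but states it is ``similar to'' that of Theorem~\ref{bridged}, and your argument is exactly that adaptation, with the $(C_4,C_5)$-free characterization of bridged graphs replaced by the defining $C_4$-free condition for weakly bridged graphs and the case analysis for $(br')$ reduced to $d(u,v)\le 2$. The degenerate cases and the observation that the only possible chord of the $4$-cycle $u,x,v,z$ is $xz$ are handled correctly, so no gap remains.
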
  
  \begin{theorem}\label{wbri}
Let $R$ be a function from $V\times V$ to $2^V$, where $V$ is a non empty set. Then $R$  satisfies the axioms $(t1)$, $(t2)$, $(t3)$, $(J0')$, $(b3)$, $(ta) $ and $(br')$   if and only if  $G_R$ is a weakly bridged graph and $R$ coincides with the interval function $I_{G_R}$.
\end{theorem}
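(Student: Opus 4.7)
The plan is to deduce Theorem~\ref{wbri} by following exactly the same template as the proof of Theorem~\ref{bri}, substituting Theorem~\ref{wbridged} for Theorem~\ref{bridged}. The argument splits cleanly into two parts: first establish that $R$ coincides with the interval function $I_{G_R}$, and then use this identification together with Theorem~\ref{wbridged} to transfer structural information between $R$ and $G_R$.

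For the ``only if'' direction, assume $R$ satisfies $(t1)$, $(t2)$, $(t3)$, $(b3)$, $(ta)$, $(J0')$ and $(br')$. First I would invoke Theorem~\ref{b2} to extract $(b2)$ (and the connectedness of $G_R$) from $(J0')$ and $(b3)$. Proposition~\ref{Ws1} then promotes $(b2), (b3), (ta)$ to the axioms $(s1)$ and $(s2)$, so the Mulder--Nebesk\'{y} Theorem~\ref{interval} applies and yields $R = I_{G_R}$. Substituting this identification back, $I_{G_R}$ satisfies $(J0')$ and $(br')$, so Theorem~\ref{wbridged} concludes that $G_R$ is weakly bridged.

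For the ``if'' direction, assume $G_R$ is weakly bridged and $R = I_{G_R}$. The transit axioms $(t1), (t2), (t3)$ hold for the interval function of every connected graph, and $(b3)$ and $(ta)$ are well-known to hold on weakly modular graphs (the latter from \cite{ch-helly}). Since Lemma~\ref{wbrid} places $G_R$ inside the class of diamond-weakly modular graphs, Theorem~\ref{TDC} supplies $(J0')$, and Theorem~\ref{wbridged} supplies $(br')$.

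The one non-routine ingredient is Theorem~\ref{wbridged} itself, whose proof mirrors Theorem~\ref{bridged} but is easier because only $C_4$ (rather than $C_4$ and $C_5$) needs to be excluded; note that $(br')$ is precisely the restriction of $(br)$ obtained by setting $x=y$. For one direction, an induced $C_4$ on consecutive vertices $u,x,v,z$ would yield $I(u,x)=\{u,x\}$, $I(x,v)=\{x,v\}$ and $z\in I(u,v)$ with $d(x,z)=2$, forcing $I(x,z)\neq\{x,z\}$ and violating $(br')$. For the converse, since weakly bridged graphs are diamond-weakly modular by Lemma~\ref{wbrid}, $I_G$ satisfies $(J0')$ by Theorem~\ref{TDC}; to verify $(br')$ one observes that the hypothesis forces $ux, xv \in E$ and hence $d(u,v)\le 2$, and when $d(u,v)=2$ any $z\in I(u,v)\setminus\{x\}$ is a common neighbor of $u$ and $v$, so the $4$-cycle $u,x,v,z$ must contain the chord $xz$ (otherwise $G$ would contain an induced $C_4$). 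The main delicate point, and the only place where care is needed, is this case analysis for $(br')$ inside Theorem~\ref{wbridged}; once it is in hand, Theorem~\ref{wbri} is essentially a packaging of our earlier results.
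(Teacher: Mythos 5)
Your proposal is correct and follows essentially the same route as the paper, which itself only states that Theorems~\ref{wbridged} and~\ref{wbri} are proved ``similarly'' to Theorems~\ref{bridged} and~\ref{bri}: combine Theorem~\ref{b2}, Proposition~\ref{Ws1} and Theorem~\ref{interval} to get $R=I_{G_R}$, then transfer $(J0')$ and $(br')$ via Theorem~\ref{wbridged}. Your explicit case analysis for $(br')$ (an induced $C_4$ violates $(br')$; conversely, in a $C_4$-free weakly modular graph any $z\in I(u,v)$ other than $u,v,x$ must be adjacent to $x$ to avoid an induced $4$-cycle) correctly fills in the details the paper leaves implicit.
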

The independence of the axioms $(t1)$, $(t2)$, $(t3)$, $(b3)$, $(br')$, $(J0')$ and $(ta)$ will easily follow from Example~\ref{ex1}, Example~\ref{ex2}, Example~\ref{ex3}, Example~\ref{e1}, Example~\ref{e2}, and Example~\ref{e3} except the following one.
\begin{example}[$(t1)$, $(t2)$, $(t3)$, $(ta)$, $(b3)$, $(J0')$ but not $(br')$]$~$
	Let $V=\{u,v,x,z\}$ and define a transit function $R$ on $V$ as follows: $R(u,v)= R(x,z)=V$    and $R(a,a)= \{a\}$ and $R(a,b)=R(b,a)$ for all $a,b \in V$.   We can see that $R$ satisfies $(t1)$, $(t2)$, $(t3)$, $ (J0')$, $(ta)$ and $(b3)$.  $R$ does not satisfy the $(br')$, since $ I(x,u)=\{x,u\}, I(v,x)=\{v,x\} $, $z\in I(u,v) $ and $ I(x,z)\neq \{x,z\}$.
\end{example}

\noindent \textbf{Concluding Remarks:}
In this paper we introduced a subclass of weakly modular graphs which satisfy a stronger version (diamond condition) of the triangle condition of the weakly modular graphs and name it as  the diamond-weakly modular graphs. We have proved that the class of diamond-weakly modular graphs contain the family of bridged graphs and  weakly bridged graphs. Further we have characterized this class of graphs in terms of the axioms of its interval function. Nevertheless, a purely graph theoretic characterization for diamond-weakly modular graphs remains as an open problem.

As a byproduct, we have obtained that the diamond-weakly modular graphs, bridged graphs and weakly bridged graphs are definable in FOLB. 

From the Mulder- Nebesk\'{y} Theorem (Theorem~\ref{interval}), the following definition of \emph{graphic interval structure} is derived by Chalopine at al. in \cite{Chalopin -2021}.  We quote the following definitions and conventions regarding the FOLB definability from \cite{Chalopin -2021}.

A \emph{graphic interval structure} is a $\sigma$-structure $(V,B)$
where $V$ is a finite set and $B$ is a ternary predicate ( a ternary relation) on $V$ satisfying the following axioms:

\begin{enumerate}
 \item [{(IB}1)] $\forall u \forall v  B(u,u,v) $
 \item [{(IB}2)] $\forall u \forall v \forall x  B(u,x,v) \implies B(v,x,u)$
\item [{(IB}3)]$\forall u \forall x B(u,x,u) \implies x=u $
\item [{(IB}4)] $\forall u \forall v \forall w \forall x  B(u,w,v) \wedge B(u,x,w) \implies B(u,x,v)$
\item [{(IB}5)]$\forall u \forall v \forall w \forall x B(u,v,x) \wedge B(u,w,x) \wedge B(u,v,w) \implies B(v,w,x)$
\item [{(IB}6)]$\forall u \forall u' \forall v \forall v' B(u,u',u') \wedge B(v,v',v') \wedge B(u',u,v') \wedge B(u,u',v) \wedge B(u,v',v) \implies B(u',v,v')$
\item [{(IB}7)] $\forall u \forall u' \forall v \forall v' B(u,u',u') \wedge B(v,v',v') \wedge B(u,u',v) \wedge \neg B(u,v',v) \wedge \neg B(u',v,v') \implies B(u,u',v')$.
\end{enumerate}

In the class $\mathcal{C}$ of $\sigma$-structures $(V,B)$
satisfying axioms (IB1)--(IB7), a query $Q$ on $\mathcal{C}$ is definable in first order logic with betweeness (\emph{FOLB-definable}), if it can be defined by a first order formula $F$ over $(V,B)$.

From the Theorems~ \ref{TDC}, \ref{bridged} and \ref{wbridged}, we can easily derive the following FOLB queries for diamond-weakly modular graphs, bridged graphs and weakly bridged graphs.

\emph{diamond-weakly modular} $\equiv$ $  \forall u,x,y,v$, $B(u,x,y)\wedge B(x,y,v) \wedge [(B(u,z,y) \wedge B(x,z,v) \implies (z=u \vee z=x \vee z=y) \wedge z \neq v) \vee ((B(u,z,y) \wedge B(x,z,v) \implies (z=u \vee z=x \vee z=v)\wedge z \neq y) \vee ((B(u,z,y) \wedge B(x,z,v) \implies (z=u \vee z=v \vee z=y) \wedge z \neq x )\vee ((B(u,z,y) \wedge B(x,z,v) \implies (z=v \vee z=x \vee z=y) \wedge z \neq u)] \implies  B(u,x,v)$.

\emph{bridged} $\equiv$ \emph{diamond-weakly modular}  $\wedge \forall u, x, y, v, z, w[(( B(x,w,y) \implies w=x \vee w=y )\wedge  ( B(x,w,u) \implies w=x  \vee w=u )\wedge ( B(v,w,y) \implies w=v  \vee w=y ) \wedge B(u,z,v))\implies (( B(x,w,z) \implies w=x  \vee w=z )\vee ( B(z,w,y) \implies w=z  \vee w=y ))]$


\emph{weakly bridged} $\equiv$ \emph{diamond-weakly modular} $\wedge  \forall u, x, v, z, w $ $[(( B(u,w,x) \implies w=u \vee w=x )\wedge ( B(x,w,v) \implies w=x \vee w=v )\wedge B(u,z,v)) \implies (( B(x,w,z) \implies w=x \vee w=z ))] $

\end{document}